\title{The integral Hodge conjecture for 3-folds
of Kodaira dimension zero}
\author{Burt Totaro}
\date{  }
\def\Z{\text{\bf Z}}
\def\Q{\text{\bf Q}}
\def\R{\text{\bf R}}
\def\C{\text{\bf C}}
\def\P{\text{\bf P}}
\def\arrow{\rightarrow}
\def\inj{\hookrightarrow}
\def\imp{\Rightarrow}
\def\Hom{\text{Hom}}
\def\van{\text{van}}
\def\im{\text{im}}
\def\rank{\text{rank}}
\def\Gal{\text{Gal}}
\def\o{\underline{\Omega}}
\def\Aut{\text{Aut}}
\def\Pic{\text{Pic}}
\def\et{\text{et}}
\def\Br{\text{Br}}
\def\X{\widehat{X}}
\begin{document}
\maketitle
\newtheorem{theorem}{Theorem}[section]
\newtheorem{proposition}[theorem]{Proposition}
\newtheorem{corollary}[theorem]{Corollary}
\newtheorem{lemma}[theorem]{Lemma}

\theoremstyle{definition}
\newtheorem{definition}[theorem]{Definition}
\newtheorem{example}[theorem]{Example}

\theoremstyle{remark}
\newtheorem{remark}[theorem]{Remark}

The Hodge conjecture is true for all smooth complex projective 3-folds,
by the Lefschetz $(1,1)$ theorem and
the hard Lefschetz theorem \cite[p.~164]{GH}.
The integral Hodge conjecture is a stronger statement which 
fails for some 3-folds, in fact for some
smooth hypersurfaces in $\P^4$, by Koll\'ar \cite{KollarTrento}.
Voisin made a dramatic
advance by proving the integral Hodge conjecture for all uniruled 
3-folds (or, equivalently, all 3-folds with Kodaira dimension
$-\infty$) and all 3-folds $X$ with trivial canonical bundle $K_X$
and first Betti number zero \cite{Voisinint}. Also, Grabowski proved
the integral Hodge conjecture for abelian 3-folds
\cite[Corollary 3.1.9]{Grabowski}.

In this paper, we prove the integral
Hodge conjecture for all smooth projective 3-folds $X$
of Kodaira dimension zero
with $h^0(X,K_X)>0$ (hence equal to 1).
This generalizes the results of Voisin
and Grabowski in two directions.
First, it includes all smooth projective
3-folds with trivial canonical bundle,
not necessarily with first Betti number zero.
For example, the integral Hodge conjecture holds for quotients
of an abelian 3-fold by a free action of a finite group preserving
a volume form, and for
volume-preserving quotients of a K3 surface times an elliptic curve.
Second,
our result includes any smooth projective 3-fold whose minimal model
is a possibly singular variety with trivial canonical bundle.

In contrast, Benoist
and Ottem showed that the integral Hodge conjecture can fail
for 3-folds of any Kodaira dimension $\geq 0$. In particular,
it can fail for an Enriques surface times an elliptic curve;
in that case, $X$ has Kodaira dimension zero, and in fact
the canonical bundle is torsion of order 2
\cite{BO}. So our positive result is sharp in a strong
sense.

The proof here covers all cases (including abelian 3-folds) in a unified
way, building on the arguments of Voisin and H\"oring-Voisin \cite{HV}.
In order to show that
a given homology class is represented by an algebraic 1-cycle on $X$,
we consider a family of surfaces of high degree in a minimal model
of $X$.
The 1-cycle we want cannot be found on most surfaces in the family,
but it will appear on some surface in the family. This uses an analysis
of Noether-Lefschetz loci, which depends on the assumption
that $h^0(X,K_X)>0$.

As an application of what we know about the integral Hodge conjecture,
we prove the integral Tate conjecture for all rationally connected 3-folds
and all 3-folds of Kodaira dimension zero with $h^0(X,K_X)>0$
in characteristic zero
(Theorem \ref{tatezero}). Finally, we prove the integral
Tate conjecture for abelian 3-folds in any characteristic
(Theorem \ref{abeliantate}).

I thank Yuri Tschinkel for a good question.
This work was supported by National Science Foundation
grant DMS-1701237, and by grant DMS-1440140
while the author was in residence at the
Mathematical Sciences Research Institute in Berkeley, California,
during the Spring 2019 semester.

\section{Notation}

The {\it integral Hodge conjecture }for
a smooth complex projective variety $X$ asserts that every element
of $H^{2i}(X,\Z)$ whose image in $H^{2i}(X,\C)$ is of type $(i,i)$
is the class of an algebraic cycle of codimension $i$,
that is, a $\Z$-linear combination
of subvarieties of $X$. The {\it Hodge conjecture }is
the analogous statement for rational cohomology and algebraic cycles
with rational coefficients.
The {\it integral Tate conjecture }for
a smooth projective variety $X$ over the separable closure $F$ of a finitely
generated field says: for $k$ a finitely generated
field of definition of $X$ whose separable closure is $F$
and $l$ a prime number invertible
in $k$, every element of $H^{2i}(X_F,\Z_l(i))$
fixed by some open subgroup of $\Gal(F/k)$ is the class of an algebraic
cycle over $F$ with $\Z_l$ coefficients. Although it does not hold
for all varieties, this version of the integral
Tate conjecture holds
in more cases than the analogous statement
over the finitely generated field $k$ \cite[section 1]{Totaro}.
The {\it Tate conjecture }is the analogous statement with
$\Q_l$ coefficients.

On a normal variety $Y$, we use a natural generalization of the vector bundle
of differential forms on a smooth variety, the sheaf
$\Omega^{[j]}_Y$ of {\it reflexive differentials}:
$$\Omega^{[j]}_Y:=(\Omega^j_Y)^{**}=i_*\Omega^j_U,$$
where $i\colon U\arrow Y$ is the inclusion of the smooth locus.

For a vector space $V$, $P(V)$ denotes the space
of hyperplanes in $V$.

\section{Examples}

In this section, we discuss some examples of 3-folds
satisfying our assumptions, and how our proof works in various
cases. One interesting point is the following dichotomy
among 3-folds satisfying our assumptions. 
This dichotomy will not be used in the rest of the paper,
but the proof of Lemma \ref{structure}
develops some basic properties
of these 3-folds that will be used.

\begin{lemma}
\label{structure}
Let $X$ be a smooth projective complex 3-fold of Kodaira
dimension zero with $h^0(X,K_X)>0$ (hence equal to 1).
Let $Y$ be a minimal model of $X$. (Here $Y$ is a terminal
3-fold with $K_Y$ trivial.) Then either $H^1(X,O)=H^1(Y,O)$
is zero or $Y$ is smooth (or both).
\end{lemma}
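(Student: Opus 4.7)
The plan is to establish the identity $H^1(X,O_X)=H^1(Y,O_Y)$ first, and then prove that $q(Y):=h^1(Y,O_Y)>0$ forces $Y$ to be smooth. The first statement is immediate: $Y$ has terminal, hence rational, singularities, so for any resolution $\pi\colon \widetilde Y\to Y$ one has $R^i\pi_*O_{\widetilde Y}=0$ for $i>0$, giving $H^1(\widetilde Y,O)=H^1(Y,O_Y)$; combined with the birational invariance of $h^1(O)$ among smooth projective varieties (which gives $H^1(\widetilde Y,O)=H^1(X,O_X)$), this yields the equality.

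The substantive content is the dichotomy. Assume $q:=h^1(Y,O_Y)>0$. I would consider the Albanese morphism $\alpha\colon Y\arrow A$ (which, since $Y$ has rational singularities, really is a morphism from $Y$, not just from a resolution). By a theorem of Kawamata on varieties of Kodaira dimension zero, $\alpha$ is surjective. Then I would split into the three cases $q=1,2,3$ and show in each that $Y$ is smooth:

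For $q=3$, $\alpha$ is generically finite, and $K_Y=\alpha^*K_A=0$ forces the ramification divisor to vanish; $\alpha$ is then étale in codimension one, and using that $Y$ is terminal with $K_Y$ Cartier trivial while $A$ is $\Q$-factorial, $\alpha$ must be finite étale, so $Y$ is itself an abelian 3-fold. For $q=2$, the general fibre of $\alpha$ is a curve of arithmetic genus one (by the canonical bundle formula and $K_Y=0$), and I would invoke isotriviality: after an isogeny $A'\arrow A$, the base change $Y\times_AA'$ becomes a product $A'\times E$. The resulting étale cover $A'\times E\arrow Y$ is by a smooth variety, so $Y$ is smooth. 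For $q=1$, the same strategy applies with an étale cover $E'\arrow E$ yielding $Y\times_EE'\cong E'\times S$ with $S$ an abelian surface or a K3; again $Y$ is smooth as the free quotient of a smooth variety.

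The main obstacle is the isotriviality/product-decomposition step in the cases $q=1,2$, specifically ruling out singular fibres of $\alpha$ in the terminal (possibly non-Gorenstein-globally-trivial) setting. The cleanest tool is the Beauville--Bogomolov decomposition theorem for klt Calabi--Yau varieties (Greb--Guenancia--Kebekus, Druel, H\"oring--Peternell), which produces a quasi-\'etale cover of $Y$ that is a product of an abelian variety, strict Calabi--Yaus, and irreducible symplectic factors; the dimension constraint in dimension three, together with $q>0$, forces this cover to be one of $A$, $A_2\times E$, $E_1\times E_2\times E_3$, or $S\times E$ with $S$ a K3 or abelian surface---all smooth. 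A small extra argument is then needed to upgrade ``quasi-\'etale cover'' to ``\'etale cover'' (so that the quotient stays smooth), using terminality plus the fact that the factors carry trivial canonical line bundles and the cover is crepant; once this is done, smoothness of $Y$ follows.
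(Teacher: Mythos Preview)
Your approach is essentially correct but quite different from the paper's, and two steps you flag as routine actually require real input.

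The paper does not use the Albanese map or any structure theorem. Instead it computes $h^0(Y,TY)$ directly: since $K_Y$ is trivial, $TY\cong\Omega^{[2]}_Y$, and then via du Bois theory and the polarization of the (pure) weight-2 Hodge structure on $H^2(Y,\Q)$ one gets $H^0(Y,TY)\cong H^0(Y,\underline{\Omega}^2_Y)\cong H^2(Y,O_Y)^*\cong H^1(Y,O_Y)$. So $q(Y)>0$ means $\Aut^0(Y)$ has positive dimension. By Barsotti--Chevalley and the non-uniruledness of $Y$, $\Aut^0(Y)$ is an abelian variety; by Brion (extending Nishi--Matsumura), an abelian variety acts with finite stabilizers, so it cannot preserve an isolated singular point. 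Hence $Y$ is smooth. This is lighter than the singular Beauville--Bogomolov machinery you invoke, and it simultaneously sets up the Hodge-theoretic facts about $H^2(Y)$ that the paper reuses later.

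In your route, the two soft spots are exactly the ones you gesture at. First, the quasi-\'etale cover $Z$ produced by H\"oring--Peternell is \emph{not} automatically smooth: the decomposition allows a factor that is a K3 surface with ADE singularities, so $Z$ could be $E\times S$ with $S$ singular. You need to observe that since $Y$ is terminal and the cover is quasi-\'etale, $Z$ is terminal too, hence has isolated singularities; but $\mathrm{Sing}(E\times S)=E\times\mathrm{Sing}(S)$ is one-dimensional unless $S$ is smooth. Second, ``upgrade quasi-\'etale to \'etale'' is not a formality: purity in the needed direction fails when the target is singular. The fix is specific to this situation: because $K_Y$ is Cartier (index one), Reid's classification says the terminal singularities of $Y$ are isolated cDV, hence hypersurface; by Milnor their links are simply connected, so a finite cover \'etale over $Y\setminus\mathrm{Sing}(Y)$ is locally trivial near each singular point, and the smooth $Z$ would then be locally isomorphic to the singular germ of $Y$, a contradiction. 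With these two points supplied, your argument goes through; without them, the proof has genuine gaps. Your preliminary Albanese case analysis for $q=1,2$ is, as you suspect, hard to make rigorous on its own and is best discarded in favor of the decomposition argument.
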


Note that the integral Hodge conjecture for smooth projective
3-folds is a birationally
invariant property \cite[Lemma 15]{Voisinsome}. Therefore,
to prove Theorem \ref{minimal} (the integral Hodge conjecture for $X$
as above), we could assume that $H^1(X,O)=0$ or else
that $K_X$ is trivial (although we will not in fact divide
up the proof of Theorem \ref{minimal} that way).
The case with $H^1(X,O)=0$ follows
from work of H\"oring and Voisin \cite[Proposition 3.18]{HV}
together with
a relatively easy analysis of singularities below
(Lemma \ref{sing}). (To give examples of such 3-folds:
there are many terminal
hypersurface singularities in dimension 3, such as
any isolated singularity of the form $xy+f(z,w)=0$ for some power series $f$
\cite[Definition 3.1, Corollary 3.12]{Reid}, and $X$ could
be any resolution of a terminal quintic 3-fold in $\P^4$.)

The case of smooth projective 3-folds $X$
with $K_X$ trivial (but $H^1(X,O)$ typically not zero) is harder,
and requires a thorough reworking of H\"oring and Voisin's arguments.
We discuss examples of such varieties after the following proof.

\begin{proof}
(Lemma \ref{structure})
By Mori, there is a minimal model $Y$ of $X$ \cite[2.14]{KM}.
That is, $Y$ is a terminal
3-fold whose canonical divisor $K_Y$ is nef, with a birational
map from $X$ to $Y$.
Terminal varieties are smooth in codimension 2,
and so $Y$ is smooth
outside finitely many points. Since $X$ has Kodaira dimension zero,
the Weil divisor class $K_Y$ is torsion
by the abundance theorem for 3-folds,
proved by Kawamata and Miyaoka \cite{Kawamata}.
Since $h^0(Y,K_Y)=h^0(X,K_X)>0$,
$K_Y$ is trivial (and hence $h^0(X,K_X)=h^0(Y,K_Y)=1$).

Since $K_Y$ is linearly equivalent to zero,
$K_Y$ is in particular a Cartier divisor. 
Also, since $Y$ is terminal, it has rational singularities
\cite[3.8]{Reid}; in particular, it is Cohen-Macaulay.
So the line bundle $K_Y$ is the dualizing sheaf of $Y$.
Since $Y$ is a terminal 3-fold $Y$ with $K_Y$ Cartier,
it has only hypersurface
(hence lci) singularities, by Reid \cite[Theorem 3.2]{Reid}.
Let $S$ be a smooth ample Cartier divisor in $Y$; then $S$
is contained in the smooth locus of $Y$.
By Goresky and MacPherson,
$i_*\colon H_2(S,\Z)\arrow H_2(Y,\Z)$ is surjective, using
that $Y$ has only lci singularities
\cite[p.~24]{Fultontopology}. 

For any scheme $Y$ of finite type over the complex numbers,
du Bois constructed a canonical object $\o^*_Y$ in the filtered
derived category of $Y$, isomorphic to the constant sheaf $\C_Y$
in the usual derived category $D(Y)$ \cite{DuBois}. For $Y$ smooth,
this is simply the de Rham complex. Write $\o^j_Y$ in $D(Y)$ for the $j$th
graded piece of $\o^*_Y$ with respect to the given filtration, shifted
$j$ steps to the left;
for $Y$ smooth, this is the sheaf $\Omega^j_Y$ in degree zero.
For $Y$ proper
over $\C$, the resulting spectral sequence 
$$E_1^{pq}=H^q(Y,\o^p_Y)\imp H^{p+q}(Y,\C)$$
degenerates at $E_1$ \cite[Theorem 4.5]{DuBois}.
The associated filtration on $H^*(Y,\C)$
is the Hodge filtration defined by Deligne.

The objects $\o^j_Y$ need not be sheaves, even in our very special
situation, where $Y$ has terminal 3-fold hypersurface singularities.
In particular, Steenbrink showed that $\o^1_Y$ has nonzero cohomology
in degree $1$ (not just degree 0) for any isolated rational complete
intersection 3-fold singularity other than a node or a smooth point
\cite[p.~1374]{SteenbrinkDuBois}.

In our case, because $H_2(S,\Z)\arrow H_2(Y,\Z)$ is surjective,
the pullback $H^2(Y,\Q)\arrow H^2(S,\Q)$ is injective.
By strict compatibility of pullback maps with the weight
filtration, it follows that the mixed Hodge structure
on $H_2(Y,\Q)$ is pure of weight $-2$ \cite{Delignepoids},

So $H^2(Y,\Q)$ is pure of weight 2. By the discussion above,
the graded pieces of the Hodge filtration on $H^2(Y,\C)$
are $H^2(Y,\o^0_Y)$, $H^1(Y,\o^1_Y)$, and $H^0(Y,\o^2_Y)$.
Since $Y$ is terminal (log canonical would be enough),
it is du Bois, which means
that $\o^0_Y\cong O_Y$ \cite{KK}.

Let $TY=(\Omega^1_Y)^*$, which is a reflexive sheaf on $Y$.
The Lie algebra of the automorphism group of $Y$
is $H^0(Y,TY)$. We have $TY\cong \Omega^{[2]}_Y\otimes K_Y^*
\cong \Omega^{[2]}_Y$, since $K_Y$ is trivial. So
$H^0(Y,TY)\cong H^0(Y,\Omega^{[2]})$. Du Bois's object
$\o^2_Y$ in the derived category of $Y$
has ${\cal H}^0(\o^2_Y)\cong \Omega^{[2]}$ since $Y$ is klt
\cite[Theorems 5.4 and 7.12]{HJ}. Since the cohomology sheaves
of $\o^2_Y$ are concentrated in degrees $\geq 0$,
it follows that $H^0(Y,\Omega^{[2]}_Y)\cong H^0(Y,\o^2_Y)$.

The polarization of $H^2(S_{t_0},\Q)$ by the intersection
form gives a canonical direct-sum decomposition of Hodge structures
\cite[Lemma 7.36]{Voisinbook}:
$$H^2(S_{t_0},\Q)=H^2(Y,\Q)\oplus H^2(S_{t_0},\Q)^{\perp}.$$
The restriction of this polarization
gives a polarization of the Hodge structure $H^2(Y,\Q)$;
this can be described
as the polarization of $H^2(Y,\Q)$ given by the ample line
bundle $H=O(S)$ on $Y$.

In particular,
the polarization of $H^2(Y,\Q)$ gives an isomorphism $H^0(Y,\o^2_Y)
\cong H^2(Y,\o^0_Y)^*\cong H^2(Y,O)^*$. 
By Serre duality and the triviality of $K_Y$,
we have $H^2(Y,O)^*\cong H^1(Y,K_Y)\cong H^1(Y,O)$.
Putting this all together,
we have $H^0(Y,TY)\cong H^1(Y,O)$.

Thus, if $H^1(X,O)$ is not zero, then the identity component
$\Aut^0(Y)$ of $\Aut(Y)$ has positive dimension. By the Barsotti-Chevalley
theorem, $\Aut^0(Y)$
is an extension of an abelian variety
by a connected linear algebraic group \cite[Theorem 8.27]{Milne}.
Any connected linear algebraic group over $\C$ is unirational
\cite[Theorem 17.93]{Milne}. Since $Y$ has Kodaira
dimension 0, it is not uniruled, and so it has no nontrivial
action of a connected linear algebraic group. We conclude
that $A:=\Aut^0(Y)$ is an abelian variety of positive dimension.

By Brion, extending work of Nishi and Matsumura,
any faithful action of an abelian variety on a normal
quasi-projective variety has finite stabilizer groups
\cite[Theorem 2]{Brion}. In our case, $A$ preserves
the singular locus of $Y$, which has dimension at most 0
because $Y$ is a terminal 3-fold. Since $A$ has positive dimension,
the singular locus of $Y$ must be empty.
\end{proof}

We conclude the section by giving examples of smooth
projective 3-folds $X$ with $K_X$ trivial
and $H^1(X,O_X)\neq 0$, beyond the obvious examples:
a K3 surface times an elliptic curve, or an abelian 3-fold.
The Beauville-Bogomolov structure theorem implies
that $X$ is a quotient
of a variety $Z$ of one of those special types by a free
action of a finite group \cite{BeauvilleCY}.
Knowing the integral Hodge conjecture for $Z$ does
not obviously imply it for $X$, which helps to motivate
this paper.

\begin{example}
An action of a finite group $G$ on a complex K3 surface $S$
is said to be {\it symplectic} if $G$ acts as the identity
on $H^0(S,K_S)\cong \C$. Mukai (completing earlier work of Nikulin)
classified the finite groups that
can act faithfully and symplectically on some K3 surface. In particular,
the abelian groups that can occur are: $\Z/a$ for $1\leq a\leq 8$,
$(\Z/2)^2$, $(\Z/2)^3$, $(\Z/2)^4$, $(\Z/3)^2$, $(\Z/4)^2$,
$\Z/2\times \Z/4$, and $\Z/2\times \Z/6$ \cite[Theorem 4.5(b)
and note added in proof]{Nikulin},
\cite[Theorem 0.6]{Mukai}.

Let $G$ be a nontrivial group on this list other than
$(\Z/2)^3$ or $(\Z/2)^4$, and let $G$ act symplectically
on a K3 surface $S$. Let $E$ be any complex elliptic curve. We can
choose an embedding of $G$ as a subgroup of $E$. Let
$X=(S\times E)/G$, where $G$ acts in the given way on $S$
and by translations on $E$. Then $X$ is a smooth projective
3-fold with $K_X$ trivial. Moreover, $H^1(X,O)$ is not zero,
because $X$ maps onto the elliptic curve $E/G$. Finally,
$X$ is not the product of a K3 surface with an elliptic curve.
So it is a new case for which Theorem \ref{minimal} proves
the integral Hodge conjecture.
\end{example}

\begin{example}
Theorem \ref{minimal} also applies to some quotients
of abelian 3-folds. For example, let $S$ be a complex abelian
surface, and let $G$ be a finite abelian group with at most 2 generators
which acts faithfully and symplectically on $S$ as an abelian surface.
Let $E$ be any elliptic curve. Choose an embedding of $G$ as a subgroup
of $E$. Let $X=(S\times E)/G$, where
$G$ acts in the given way on $S$ and by translations on $E$.
Then $X$ is a smooth projective 3-fold, $K_X$ is trivial,
and $H^1(X,O)$ is not zero, because $X$ maps onto
the elliptic curve $E/G$. Here $X$ is not an abelian 3-fold,
and so it is a new case for which this paper
proves the integral Hodge conjecture. The simplest
case is $G=\Z/2$, acting on any abelian surface $S$
by $\pm 1$.
\end{example}

\section{Terminal 3-folds}

We here analyze the homology of the exceptional divisor
of a resolution
of an isolated rational 3-fold singularity (Lemma \ref{sing}).
This will be used in proving the integral Hodge conjecture
for certain 3-folds whose minimal model is singular
(Theorem \ref{minimal}).

\begin{lemma}
\label{sing}
Let $Y$ be a complex 3-fold with isolated rational singularities.
Let $\pi\colon X\arrow Y$ be a projective birational morphism
with $X$ smooth such that $\pi$
is an isomorphism over the smooth locus of $Y$ and 
the inverse image of the singular locus of $Y$ is a divisor $D$
in $X$ with simple normal crossings. Then $H_2(D,\Z)$ is generated
by algebraic 1-cycles on $D$.
\end{lemma}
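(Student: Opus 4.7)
The plan is to establish two intermediate statements and combine them: (a) every class in $H_2(D_i,\Z)$ on each irreducible component $D_i$ of $D$ is represented by an algebraic $1$-cycle on $D_i$, and (b) the pushforward $\bigoplus_i H_2(D_i,\Z)\to H_2(D,\Z)$ is surjective. Granting both, any class in $H_2(D,\Z)$ is a sum of pushforwards of algebraic $1$-cycles from the components, proving the lemma.

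For (a), I would first use rationality of the singularities of $Y$ to deduce $H^i(D,O_D)=0$ for $i>0$. This follows from $R\pi_*O_X=O_Y$ together with the du Bois cubical distinguished triangle relating $\o^0_Y$, $R\pi_*O_X$, $O_{\mathrm{Sing}(Y)}$, and $R(\pi|_D)_*O_D$, using that $\o^0_Y\cong O_Y$ since $Y$ is du Bois (as in the proof of Lemma \ref{structure}). Next, the spectral sequence $E_1^{p,q}=\bigoplus_{|I|=p+1}H^q(D_I,O)\Rightarrow H^{p+q}(D,O_D)$ attached to the SNC stratification has all differentials into and out of $E_r^{0,2}$ landing in $H^2$ of curves $D_{ij}$ or points $D_{ijk}$, which vanish; hence $H^2(D,O_D)=0$ forces $\bigoplus_i H^2(D_i,O_{D_i})=0$, so each smooth projective surface $D_i$ has geometric genus zero. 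By the Lefschetz $(1,1)$-theorem every class in $H^2(D_i,\Z)$ is the first Chern class of a line bundle on $D_i$, and Poincar\'e duality on $D_i$ yields (a).

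For (b), I would analyze the homological Mayer--Vietoris spectral sequence $E^1_{p,q}=\bigoplus_{|I|=p+1}H_q(D_I,\Z)\Rightarrow H_{p+q}(D,\Z)$. Since $X$ is a $3$-fold there are no $4$-fold intersections, so the only contributions to $H_2(D,\Z)$ come from $E^\infty_{0,2}$, $E^\infty_{1,1}$, and $E^\infty_{2,0}$. A direct dimension check shows that no $d^r$ with $r\geq 1$ maps into $E^r_{1,1}$ or $E^r_{2,0}$, so these $E^\infty$-terms are subgroups of the free abelian groups $\bigoplus H_1(D_{ij},\Z)$ and $\bigoplus H_0(D_{ijk},\Z)$ respectively, and in particular are torsion-free. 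On the Hodge-theoretic side, $H^2(D,O_D)=0$ forces $F^1H^2(D,\C)=H^2(D,\C)$; combined with complex conjugation on each weight-graded piece of the mixed Hodge structure on $H^2(D)$, this forces $H^2(D,\Q)$ to be pure of type $(1,1)$ and weight $2$. Dually, the weight $-1$ and weight $0$ parts of $H_2(D,\Q)$ vanish, giving $E^\infty_{1,1}\otimes\Q=E^\infty_{2,0}\otimes\Q=0$; by torsion-freeness these also vanish integrally. Hence $H_2(D,\Z)=E^\infty_{0,2}$, realized as the quotient of $\bigoplus_i H_2(D_i,\Z)$ by the pushforward relations coming from $\bigoplus H_2(D_{ij},\Z)$; in particular the pushforward from the components is surjective.

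The main obstacle is upgrading the Hodge-theoretic (rational) vanishing of the lower weight-graded pieces of $H_2(D,\Q)$ to the integral statement needed for (b). This upgrade succeeds here specifically because $D$ lies in a smooth $3$-fold: the bound on the depth of the SNC stratification forces the relevant spectral-sequence pieces to sit as subgroups of torsion-free ambient groups, so rational vanishing propagates to $\Z$.
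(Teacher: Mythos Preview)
Your proof is correct and follows the same architecture as the paper's: both reduce to (a) $H^2(D_i,O)=0$ via the coherent Mayer--Vietoris spectral sequence and $H^2(D,O)=0$, hence Lefschetz $(1,1)$ on each $D_i$, and (b) surjectivity of $\bigoplus H_2(D_i,\Z)\to H_2(D,\Z)$ via torsion-freeness of $E^\infty_{1,1}$ and $E^\infty_{2,0}$ combined with their rational vanishing. The one substantive difference is how the rational vanishing is established. You argue globally via the mixed Hodge structure on $H^2(D)$: from $H^2(D,O_D)=0$ you get $F^1H^2(D,\C)=H^2(D,\C)$, hence purity of weight $2$ and type $(1,1)$, and then you invoke Deligne's identification of the weight filtration with the Mayer--Vietoris filtration (and its $E_2$-degeneration over $\Q$) to kill $E^\infty_{1,1}\otimes\Q$ and $E^\infty_{2,0}\otimes\Q$. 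The paper instead works by hand, mapping the $\C$-coefficient spectral sequence to the $O$-coefficient one and using the weight-$1$ Hodge decomposition $H^1(D_{ij},\C)=H^1(D_{ij},O)\oplus\overline{H^1(D_{ij},O)}$ on the curves to lift surjectivity of $d_1$ and $d_2$ from $O$ to $\C$. Your route is cleaner and more conceptual; the paper's is more self-contained, avoiding the appeal to the full statement that the spectral sequence filtration is the weight filtration.
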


\begin{proof}
We start with the following result by Steenbrink 
\cite[Lemma 2.14]{SteenbrinkMHS}.

\begin{lemma}
\label{rational}
Let $\pi\colon X\arrow Y$ be a log resolution of an isolated
rational singularity, with exceptional divisor $D$.
Then $H^{i}(D,O)=0$ for all $i>0$.
\end{lemma}

We continue the proof of Lemma \ref{sing}.
Let $D_1,\ldots,D_r$ be the irreducible components of $D$,
which are smooth projective surfaces. Write $D_{i_0\cdots i_l}$
for an intersection $D_{i_0}\cap\cdots \cap D_{i_l}$.
We have an exact sequence
of coherent sheaves on $D$:
$$0\arrow O_D\arrow \oplus_i O_{D_i}\arrow \oplus_{i<j}O_{D_{ij}}
\arrow \oplus_{i<j<k}O_{D_{ijk}}\arrow 0.$$
Taking cohomology gives a Mayer-Vietoris spectral sequence
$$E_1^{p,q}=\oplus_{i_0<\cdots <i_p}H^q(D_{i_0\cdots i_p},O)
  \imp H^{p+q}(D,O).$$
$$\xymatrix{\oplus H^2(D_i,O)\ar[r]& 0& 0& 0\\
\oplus H^1(D_i,O)\ar[r]\ar@{-->}[rrd]&
  \oplus H^1(D_{ij},O)\ar[r]& 0& 0\\
\oplus H^0(D_i,O)\ar[r]& \oplus H^0(D_{ij},O)\ar[r]& 
  \oplus H^0(D_{ijk},O)\ar[r]& 0}$$
We have $H^2(D,O)=0$ by Lemma \ref{rational}. It follows
from the spectral sequence that each
irreducible component $D_i$ of $D$ has $H^2(D_i,O)=0$.

There is also a Mayer-Vietoris spectral sequence
for the integral homology of $D$:
$$E^1_{p,q}=\oplus_{i_0<\cdots<i_p}H_q(D_{i_0\cdots
i_p},\Z)\imp H_{p+q}(D,\Z).$$
$$\xymatrix{\oplus H_2(D_i,\Z)& \oplus H_2(D_{ij},\Z)\ar[l]& 0\ar[l] & 0\\
\oplus H_1(D_i,\Z)&
  \oplus H_1(D_{ij},\Z)\ar[l]& 0\ar[l]& 0\\
\oplus H_0(D_i,\Z)& \oplus H_0(D_{ij},\Z)\ar[l]& 
  \oplus H_0(D_{ijk},\Z)\ar[l]\ar@{-->}[llu]& 0\ar[l]}$$

Finally, we have a Mayer-Vietoris spectral sequence
converging to $H^*(D,\C)$, which can be obtained from
the integral homology spectral sequence by applying
$\Hom(\cdot,\C)$. We have a map of spectral sequences
from the one converging to $H^*(D,\C)$ to the one converging
to $H^*(D,O)$. Since $H^2(D,O)=0$, we know that the
groups $E_{\infty}^{1,1}$ and $E_{\infty}^{2,0}$ 
are zero in the spectral sequence
converging to $H^*(D,O)$. That is, the $d_1$ and $d_2$ differentials
together map onto $\oplus H^1(D_{ij},O)$ and $\oplus H^0(D_{ijk},O)$.
We will deduce that the $d_1$ and $d_2$ differentials together map
onto $\oplus H^1(D_{ij},\C)$ and $\oplus H^0(D_{ijk},\C)$.
In the first case, we are given that 
$d_1\colon \oplus H^1(D_i,O)\arrow \oplus H^1(D_{ij},O)$
is surjective, and we want to deduce that
$d_1\colon \oplus H^1(D_i,\C)\arrow \oplus H^1(D_{ij},\C)$
is surjective. That follows from $d_1\colon \oplus H^1(D_i,\C)\arrow
\oplus H^1(D_{ij},\C)$ being a morphism of Hodge structures 
of weight 1, so that $H^1(D_i,\C)=H^1(D_i,O)\oplus
\overline{H^1(D_i,O)}$ and this grading is compatible with
the differential.

A similar argument applies to $H^0$. First,
the differential $d_1\colon \oplus H^0(D_{ij},\C)\arrow \oplus H^0(D_{ijk},
\C)$ maps isomorphically to $d_1\colon \oplus H^0(D_{ij},O)
\arrow \oplus H^0(D_{ijk},O)$. Also, by the comment about
Hodge structures of weight 1, $E_2^{0,1}(\C)=\ker(\oplus H^1(D_i,\C)
\arrow \oplus H^1(D_{ij},\C))$ is the direct sum of
$E_2^{0,1}(O)=\ker(\oplus H^1(D_i,O)
\arrow \oplus H^1(D_{ij},O))$ and its conjugate, and so
$E_2^{0,1}(\C)\arrow E_2^{0,1}(O)$ is surjective. Since
$d_2\colon E_2^{0,1}(O)\arrow E_2^{2,0}(O)$ is onto and
$E_2^{0,1}(\C)\arrow E_2^{0,1}(O)$ is onto, it follows
that $d_2\colon E_2^{0,1}(\C)\arrow E_2^{2,0}(\C)$ ($=E_2^{2,0}(O)$)
is onto. That is, $E_{\infty}^{2,0}(\C)$ as well as 
$E_{\infty}^{1,1}(\C)$ are zero. Therefore $H^2(D,\C)\arrow
\oplus H^2(D_i,\C)$ is injective. (In particular, the mixed Hodge
structure on $H^2(D,\Q)$ is pure of weight 2 \cite{Delignepoids}.)
By the universal
coefficient theorem, it follows that $\oplus H_2(D_i,\Q)
\arrow H_2(D,\Q)$ is surjective.

The groups $H_0(D_{ijk},\Z)$ and $H_1(D_{ij},\Z)$ are torsion-free,
since $D_{ijk}$ is a point or empty and $D_{ij}$ is a smooth
projective curve or empty. It follows that the subgroups
$E^{\infty}_{2,0}(\Z)$ and $E^{\infty}_{1,1}(\Z)$ of these groups
are also torsion-free. Since $\oplus H_2(D_i,\Q)\arrow H_2(D,\Q)$
is surjective, those two $E^{\infty}$ groups are zero after
tensoring with the rationals, and so they are zero. Therefore,
$\oplus H_2(D_i,\Z)\arrow H_2(D,\Z)$ is surjective. Since
$H^2(D_i,O)=0$, the Lefschetz $(1,1)$ theorem gives that
the smooth projective surface $D_i$ has
$H_2(D_i,\Z)$ spanned by algebraic cycles. We deduce that
$H_2(D,\Z)$ is spanned by algebraic cycles.
\end{proof}

\section{3-folds of Kodaira dimension zero}

In this section, we begin the proof of our main
result on the integral Hodge conjecture, Theorem \ref{minimal}.
We reduce the problem
to a statement on the variation of Hodge structures
associated to a family of surfaces of high degree
in a minimal model of the 3-fold,
to be proved in the next section (Proposition \ref{hodge}).

\begin{theorem}
\label{minimal}
Let $X$ be a smooth projective complex 3-fold
of Kodaira dimension zero such that $h^0(X,K_X)>0$.
Then $X$ satisfies the integral Hodge conjecture.
\end{theorem}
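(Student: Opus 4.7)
The plan is to follow the strategy of Voisin and H\"oring-Voisin \cite{Voisinint, HV}: pass to a minimal model $Y$ of $X$ and sweep out integral Hodge classes by restricting to a family of smooth very ample surfaces $S_t \subset Y$, so that the Lefschetz $(1,1)$ theorem on the generic $S_t$ can be invoked. By the birational invariance of the integral Hodge conjecture for smooth projective 3-folds \cite[Lemma 15]{Voisinsome} combined with Lemma \ref{structure}, we may replace $X$ by any smooth birational model; I would fix a log resolution $\pi\colon X \arrow Y$ of the minimal model whose exceptional locus $D$ has simple normal crossings. Lemma \ref{sing} then says that $i_* H_2(D,\Z) \subset H_2(X,\Z)$ consists of algebraic classes, so we are reduced to proving algebraicity for integral Hodge classes coming from the smooth locus $Y^{sm}$ of the (possibly singular) minimal model $Y$.

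For those classes, I would choose a smooth very ample Cartier divisor $H$ on $Y$ lying in $Y^{sm}$, as in the proof of Lemma \ref{structure}, and set $B := |nH|$ for $n \gg 0$. A general member $S_t \in B$ is a smooth projective surface contained in $Y^{sm}$, and by Goresky-MacPherson applied to the lci variety $Y$ the map $H_2(S_t,\Z) \arrow H_2(Y,\Z)$ is surjective. Consequently every integral Hodge class $\alpha \in H_2(Y,\Z)$ admits a topological integral lift to $H_2(S_t,\Z)$ for each $t$; if some such lift can be arranged to be of Hodge type $(1,1)$ on $S_t$, then by Lefschetz $(1,1)$ it is represented by an algebraic 1-cycle on the smooth surface $S_t$, whose image in $Y$, and hence proper transform in $X$, realizes $\alpha$.

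The heart of the argument is therefore the nonemptiness of the Hodge locus $NL(\alpha) \subset B$ parameterizing those $t$ for which some integral lift of $\alpha$ is of type $(1,1)$ on $S_t$. This is precisely the statement about the variation of Hodge structures of $\{S_t\}_{t \in B}$ to be formulated and proved as Proposition \ref{hodge} in the next section; following Voisin's infinitesimal framework, one obtains $NL(\alpha)$ by showing that the derivative of the Hodge-locus condition $T_t B \arrow H^{0,2}(S_t)_{\text{prim}}$ is surjective, after which a density argument for Noether-Lefschetz loci produces an actual integral Hodge class on some $S_t$ lifting $\alpha$. The main obstacle -- and the reason the hypothesis $h^0(X,K_X) > 0$ is indispensable -- lies in this infinitesimal surjectivity: via a Griffiths-type residue calculation the derivative is identified with a multiplication map whose image is controlled by $H^0(Y,K_Y)$, so the triviality of $K_Y$ from Lemma \ref{structure} is exactly what makes the argument go through. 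The Benoist-Ottem counterexamples \cite{BO} with $K_X$ of order two confirm that the conclusion must fail without such a hypothesis, so any proof must use $h^0(X,K_X)>0$ at exactly this step.
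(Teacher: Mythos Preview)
Your outline is essentially the paper's strategy: replace $X$ by a log resolution of the minimal model $Y$, use Lemma~\ref{sing} to dispose of classes supported on the exceptional divisor, and then show that every integral Hodge class in $H_2(Y,\Z)$ lifts to an integral Hodge class on some smooth $S_t\in|nH|$, where Lefschetz $(1,1)$ finishes the job. The infinitesimal input you describe is exactly Proposition~\ref{hodge} (via Corollary~\ref{surjective}).

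There is, however, one genuine step you have glossed over. You write that ``a density argument for Noether-Lefschetz loci produces an actual integral Hodge class on some $S_t$ lifting $\alpha$.'' But the output of Proposition~\ref{hodge} is only that an open cone of \emph{vanishing} classes in $H^2(S_{t_0},\R)_{\van}$ becomes Hodge on nearby fibers, and vanishing classes push forward to $0$ in $H_2(Y,\Z)$, not to $\alpha$. The passage from that cone statement to an integral Hodge lift of a prescribed $\alpha$ is the content of Lemma~\ref{algebraic}, a coset trick the paper credits to Schoen: using the rational splitting $H^2(S,\Q)=H^2(Y,\Q)\oplus H^2(S,\Q)_{\van}$ one finds an integral $\beta$ with $i_*\beta=N\alpha$ that is Hodge on every $S_t$; then for any integral topological lift $v$ of $\alpha$, the coset $(\beta-Nv)+N\cdot H_2(S_{t_0},\Z)_{\van}$ has finite index in the vanishing lattice and therefore meets the open cone $C$. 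Picking such a $u$ Hodge on some $S_t$ gives $(\beta-u)/N$ an integral Hodge class on $S_t$ with pushforward $\alpha$. This lattice argument is where ``density'' is upgraded to ``every integral class'', and it does not follow from infinitesimal surjectivity alone; you should make it explicit.

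A minor point: the map you call ``$T_tB\to H^{0,2}(S_t)_{\text{prim}}$'' is not a single map but the family $\mu_\lambda\colon H^0(S,N_{S/Y})\to H^2(S,O)_{\van}$ indexed by $\lambda\in H^1(S,\Omega^1)_{\van}$, and it is surjectivity for a \emph{generic} $\lambda$ that one needs (Corollary~\ref{surjective}). The paper establishes this not by a residue calculation but by interpreting the dual map as a linear system of quadrics on $P(V^*)$ with $V=H^0(S,K_S)_{\van}$, and showing the generic quadric is smooth via a degeneration to a highly nodal surface and dimension counts on the base locus. The triviality of $K_Y$ enters by giving the identification $V\cong V'=H^0(Y,O(S))/H^0(Y,O)$, which is what makes the quadric interpretation possible.
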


\begin{proof}
Let $Y$ be a minimal model of $X$. Then $Y$ is terminal
and hence has singular set of dimension at most zero.
As in the proof of Lemma \ref{structure},
$K_Y$ is trivial (and hence $h^0(X,K_X)=h^0(Y,K_Y)=1$).

For codimension-1 cycles, the integral Hodge
conjecture always holds, by the Lefschetz $(1,1)$ theorem.
It remains to prove the integral Hodge conjecture for codimension-2
cycles on $X$. 
This is a birationally
invariant property for smooth projective varieties $X$
\cite[Lemma 15]{Voisinsome}. Therefore, we can assume
that the birational map $X\dashrightarrow Y$ is a morphism,
and that $X$ is whatever resolution of $Y$ we like. Explicitly,
we can assume that
$X\arrow Y$ is an isomorphism over
the smooth locus and that the fiber over each of the (finitely many)
singular points of $Y$ is a divisor with simple
normal crossings. (We do this in order to apply Lemma \ref{sing}.)

Let $H$ be a very ample line bundle on $Y$, and $S$ a smooth
surface in the linear system $|H|$.
As shown in the proof of Lemma \ref{structure},
the pushforward homomorphism $H_2(S,\Z)\arrow H_2(Y,\Z)$
is surjective.

We assume that the Hilbert scheme $\cal H$ of smooth surfaces
in $Y$ in the homology class of $S$
is smooth, which holds if $H$ is sufficiently ample.
We are free to replace $H$ by a large multiple
in the course of the argument.

The following lemma was suggested by Schoen's argument
on the integral Tate conjecture \cite[Theorem 0.5]{Schoen}.

\begin{lemma}
\label{algebraic}
Let $Y$ be a terminal projective complex 3-fold.
Write $S_{t_0}$ for the surface in $Y$ corresponding
to a point $t_0$ in $\cal H$,
with inclusion $i\colon S_{t_0}\arrow Y$. Write
$H_2(S_{t_0},\Z)_{\van}=\ker(i_*\colon H_2(S_{t_0},\Z)\arrow
H_2(Y,\Z))$. By Poincar\'e duality, identify $H^2(S_{t_0},\Z)$
with $H_2(S_{t_0},\Z)$.
Let $C$ be a nonempty open cone in $H^2(S_{t_0},\R)_{\van}$.
Suppose that there is a contractible open neighborhood $U$
of $t_0$ in $\cal H$
such that every element of $H^2(S_{t_0},\Z)_{\van}\cap C$
becomes a Hodge class on $S_t$ for some $t$ in $U$.
Then every element of $H_2(Y,\Z)$
whose image in $H_2(Y,\C)$ is in $H_{1,1}(Y)$ is algebraic.
\end{lemma}

\begin{proof}
By the proof of Lemma \ref{structure},
the pushforward $H_2(S_{t_0},\Z)\arrow H_2(Y,\Z)$ is surjective,
and so the pullback $H^2(Y,\Q)\arrow H^2(S_{t_0},\Q)$ is injective.
Therefore, the Hodge structure
on $H^2(S_{t_0},\Q)$ is pure of weight $2$. Still following
the proof of Lemma \ref{structure},
the polarization of $H^2(S_{t_0},\Q)$ by the intersection
form gives a canonical direct-sum decomposition of Hodge structures
$$H^2(S_{t_0},\Q)=H^2(Y,\Q)\oplus H^2(Y,\Q)^{\perp}.$$
In fact, this argument shows that
the surjection $i_*\colon H_2(S_{t},\Q)\arrow H_2(Y,\Q)$
is split as a map of variations of $\Q$-Hodge structures
over the space $\cal H$ of smooth surfaces $S$.
In particular,
any element of $H_2(Y,\Q)\cap H_{1,1}(Y)
\subset H_2(Y,\C)$ is the image of some element
in $H_2(S_{t_0},\Q)$ whose translate to every surface
$S_t$ is in $H_{1,1}(S_{t})$. Therefore, for any element
$\alpha$ of $H_2(Y,\Z)$ that maps into $H_{1,1}(Y)\subset H_2(Y,\C)$,
there is a positive integer $N$ and an element
$\beta$ of $H_2(S_{t_0},\Z)$ that lies in $H_{1,1}(S_{t})
\subset H_2(S_{t},\C)$ for every surface $S_t$
such that $i_*\beta=N\alpha$. 

Also, because $i_*\colon H_2(S_{t_0},\Z)\arrow H_2(X,\Z)$
is surjective, there is an element
$v\in H_2(S_{t_0},\Z)$ (not necessarily a Hodge class) with
$i_*v=\alpha$.

Let $u_0=\beta-Nv$ in $H_2(S_{t_0},\Z)$. Then $i_*u_0=0$;
that is, $u_0$ is in $H_2(S_{t_0},\Z)_{\van}$. Let
$T=u_0+N\cdot H_2(S_{t_0},\Z)_{\van}\subset H_2(S_{t_0},\Z)_{\van}$.
Since $T$ is a translate of a subgroup of finite index in
$H_2(S_{t_0},\Z)_{\van}$, $T$ has nonempty intersection with the open
cone $C$ in $H_2(S_{t_0},\R)_{\van}$.
Let $u$ be an element of $C\cap T$. Because $U$ is contractible,
we can canonically identify $H_2(S_t,\Z)$ with
$H_2(S_{t_0},\Z)$ for all $t$ in $U$. By our assumption on $C$,
$u$ becomes a Hodge class
on $H_2(S_t,\Z)$ for some $t$ in $U$.
By definition of $T$, we can write $u=u_0+Nw$ for some
$w$ in $H_2(S_{t_0},\Z)_{\van}$. We know that $\beta$ in $H_2(S_{t_0},\Z)$
is a Hodge class in $H^2(S_t,\Z)$ for all nearby surfaces
$S_t$. Since $u$ becomes a Hodge class
in $H^2(S_t,\Z)$, $\beta-u$ is a Hodge class in $H_2(S_t,\Z)$,
and $\beta-u=\beta-(u_0+Nw)=\beta-(\beta-Nv+Nw)=N(v-w)$.
So $v-w$ is a Hodge class in $H_2(S_t,\Z)$. By the Lefschetz
$(1,1)$ theorem, $v-w$ is algebraic on $S_t$.
And we have
$i_*(v-w)=i_*v=\alpha$. So $\alpha$ in $H_2(Y,\Z)$
is algebraic.
\end{proof}

We will prove the hypothesis of Lemma \ref{algebraic}
as Proposition \ref{hodge}. Given that, we now
finish the proof of Theorem \ref{minimal}.

Let $u$ be an element
of $H_2(X,\Z)\cap H_{1,1}(X)$. Topologically, $Y$ is obtained
from $Y$ by identifying the fibers $E_1,\ldots,E_r$
over singular points of $Y$ to points. So we have an exact
sequence
$$H_3(Y,\Z)\arrow \oplus_i H_2(E_i,\Z)\arrow H_2(X,\Z)
\arrow H_2(Y,\Z)\arrow \oplus_i H_1(E_i,\Z).$$
By Lemma \ref{sing}, $H_2(E_i,\Z)$ is spanned by algebraic curves
on $E_i$, for each $i$. The image of $u$ in $H_2(Y,\Z)$
is in $H_{1,1}(Y)$, and hence is in the image
of the Chow group $CH_1(Y)$ by Lemma \ref{algebraic}
and Proposition \ref{hodge}. (We use here that the integral
Hodge conjecture holds on every smooth projective surface,
by the Lefschetz $(1,1)$ theorem.)
Since $X\arrow Y$ is an isomorphism
outside a 0-dimensional subset of $Y$, it is clear
that $CH_1(X)\arrow CH_1(Y)$ is surjective.
Therefore there is a 1-cycle $\alpha$ on $X$ whose
image in $H_2(X,\Z)$ has the same image in $H_2(Y,\Z)$ as
$u$ does. By the exact sequence above, $\alpha-u$
in $H_2(X,\Z)$ is the image of some element of $\oplus H_2(E_i,\Z)$.
But $\oplus H_2(E_i,\Z)$ is spanned by algebraic cycles on $\cup_i E_i$
by Lemma \ref{sing}. Therefore $u$ is algebraic.
\end{proof}

\section{The variation of Hodge structure
associated to a family of surfaces}

To complete the proof of Theorem \ref{minimal},
we need to show that the variation of Hodge structures
on the family of surfaces in the 3-fold is as nontrivial
as possible (Proposition \ref{hodge}).
The first step is to rephrase the conclusion we want
in terms of a cup product on a general surface in the family
(Corollary \ref{surjective}), generalizing
Proposition 1 in Voisin \cite{Voisinint}.

Let $Y$ be a terminal complex projective 3-fold.
(We will eventually assume that $K_Y$ is trivial,
but it seems clearer to formulate the basic arguments
in greater generality.)
Let $H$
be a very ample line bundle on $Y$, and $S$ a smooth
surface in the linear system $|H|$. (It follows that $S$ is contained
in the smooth locus of $Y$.) We are free to replace $H$ by a large multiple
in the course of the argument.

By the proof of Lemma \ref{structure}, the Hodge structure
on $H^2(Y,\Q)$ is pure of weight 2, and the graded pieces
of the Hodge filtration on $H^2(Y,\C)$ are $H^2(Y,O)$,
$H^1(Y,\o^1_Y)$, and $H^0(Y,\o^2_Y)$.

Define the vanishing cohomology $H^2(S,\Z)_{\van}$ to be the kernel
of the pushforward homomorphism
$i_*\colon H^2(S,\Z)\cong H_2(S,\Z)\arrow H_2(Y,\Z)$. Likewise, write
\begin{align*}
H^2(S,O)_{\van} &= \ker(H^2(S,O)\arrow H^0(Y,\o^2_Y)^*)\\
H^1(S,\Omega^1_S)_{\van} &= \ker(H^1(S,\Omega^1)\arrow H^1(Y,\o^1_Y)^*)\\
H^0(S,\Omega^2_S)_{\van} &= \ker(H^0(S,\Omega^2)\arrow H^2(Y,\o^0_Y)^*).
\end{align*}
These maps are Hodge-graded pieces
of the surjection $H_2(S,\C)\arrow H_2(Y,\C)$ (dual to the pullback
$H^2(Y,\C)\arrow H^2(S,\C)$), and so they are also
surjective. By Serre duality, $H^2(Y,\o^0_Y)^*=H^2(Y,O)^*
\cong H^1(Y,K_Y)$. So we can also describe $H^0(S,\Omega^2_S)$
as the kernel of the pushforward $H^0(S,K_S)\arrow H^1(Y,K_Y)$.

The cohomology sheaves of $\o^j_Y$ are
in degrees $\geq 0$, and the 0th cohomology sheaf
is the sheaf $\Omega^{[j]}_Y$ of reflexive differentials,
because $Y$ is klt \cite[Theorems 5.4
and 7.12]{HJ}.
Also, $\o^j_Y$ is concentrated in degrees from 0 to $3-j$,
since $Y$ has dimension 3
\cite[Th\'eor\`eme V.6.2]{Guillen}. It follows that
$\o^3_Y$ is the canonical sheaf $K_Y$.

We assume that the Hilbert scheme $\cal H$ of smooth surfaces
in $Y$ in the homology class of $S$
is smooth, which holds if $H$ is sufficiently ample.
Then $H^0(S,N_{S/Y})$ is the tangent space to $\cal H$ at $S$.
Let $\delta$ be the class of the extension $0\arrow TS
\arrow TY|_S\arrow N_{S/Y}\arrow 0$ in $H^1(S,N_{S/Y}^*\otimes
TS)$. 
Then the product with $\delta$ is the Kodaira-Spencer
map $H^0(S,N_{S/Y})\arrow H^1(S,TS)$,
which describes how the isomorphism class of $S$ changes
as $S$ moves in $Y$.

For $u$ in $H^1(S,TS)$, the product with $u$
is a linear map
$$u\cdot \colon H^1(S,\Omega^1)\arrow H^2(S,O),$$
The dual map
$$u\cdot \colon H^0(S,K_S)\arrow H^1(S,\Omega^1)$$
can also be described as the product with $u$.
For $\lambda$ in $H^1(S,\Omega^1)$, define
$$\mu_{\lambda}\colon H^0(S,N_{S/Y})\arrow H^2(S,O)$$
by $\mu_{\lambda}(n)=(\delta n) \lambda$. This map
describes the failure of $\lambda\in H^2(S,\C)$ to remain
a $(1,1)$ class when the surface $S$ is deformed in $X$.
For $\lambda$ in $H^1(S,\Omega^1)_{\van}$, the map $\mu_{\lambda}$
lands in the vanishing subspace $H^2(S,O)_{\van}$,
because $H^2(S,\Q)=H^2(Y,\Q)\oplus H^2(S,\Q)_{\van}$
as Hodge structures, where
the Hodge structure on $H^2(Y,\Q)$ is unchanged as $S$ is deformed.

\begin{corollary}
\label{surjective}
Let $Y$ be a terminal projective complex 3-fold. Let $H$
be a very ample line bundle on $Y$, and $S$ a smooth
surface in the linear system $|H|$. Suppose that there
is an element $\lambda$ in $H^1(S,\Omega^1)_{\van}$
such that the linear map
$$\mu_{\lambda}\colon H^0(S,N_{S/Y})\arrow H^2(S,O)_{\van}$$
is surjective. Then there is a nonempty open cone $C$
in $H^2(S_{t_0},\R)_{\van}$ and
a contractible open neighborhood $U$
of $t_0$ in $\cal H$
such that every element of $H^2(S_{t_0},\Z)_{\van}\cap C$
becomes a Hodge class on $S_t$ for some $t$ in $U$.
\end{corollary}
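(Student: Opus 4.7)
The approach is a Green-Voisin style infinitesimal argument for density of Noether-Lefschetz loci, applied at the point $(t_0,\lambda)$. Before setting it up, I would note that the set of $\lambda\in H^1(S,\Omega^1)_{\van}$ for which $\mu_\lambda$ is surjective is Zariski open in this complex vector space, while the real form $H^1(S,\Omega^1)_{\van}\cap H^2(S_{t_0},\R)_{\van}$ is Zariski dense in $H^1(S,\Omega^1)_{\van}$ (since $H^{1,1}$ is stable under complex conjugation). So we may and do replace $\lambda$ by a real class on which $\mu_\lambda$ is still surjective.

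Next, choose a contractible open neighborhood $U$ of $t_0$ in $\cal H$ and trivialize the vanishing local system via Gauss-Manin, identifying $H^2(S_t,\Z)_{\van}$ with $W_\Z:=H^2(S_{t_0},\Z)_{\van}$ for all $t\in U$. Write $W_\C=W_\Z\otimes\C$ and $W_\R=W_\Z\otimes\R$. The subspace $F^1_t:=F^1H^2(S_t,\C)_{\van}\subset W_\C$ varies holomorphically in $t$, and a real class $\eta\in W_\R$ is Hodge on $S_t$ exactly when $\eta\in F^1_t$. Fix a complex complement $K$ of $F^1_{t_0}$ in $W_\C$, so that $K\cong H^2(S_{t_0},O)_{\van}$; after shrinking $U$, $F^1_t$ remains transverse to $K$, giving a holomorphic family of projections $\pi_t\colon W_\C\arrow K$ along $F^1_t$ with $\pi_t(\eta)=0$ iff $\eta\in F^1_t$.

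Define $\Phi\colon U\times W_\R\arrow K$ by $\Phi(t,\eta)=\pi_t(\eta)$, so that the Hodge locus of $\eta$ inside $U$ is $\{t\in U:\Phi(t,\eta)=0\}$. At $(t_0,\lambda)$ we have $\Phi(t_0,\lambda)=0$, and by the standard identification of the infinitesimal variation of Hodge structure with cup product against the Kodaira-Spencer class, the partial derivative in the $U$ direction is
\[
\partial_n\Phi\big|_{(t_0,\lambda)}=(\delta n)\cdot\lambda=\mu_\lambda(n),
\]
which is surjective by hypothesis; the partial derivative in the $W_\R$ direction is $\xi\mapsto\pi_{t_0}(\xi)$. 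Hence $d\Phi|_{(t_0,\lambda)}$ is surjective, and the real-analytic implicit function theorem makes $\Phi^{-1}(0)$ a real-analytic submanifold near $(t_0,\lambda)$. Moreover, for any $\xi\in W_\R$, surjectivity of $\mu_\lambda$ yields $n\in T_{t_0}U$ with $\mu_\lambda(n)=-\pi_{t_0}(\xi)$; then $(n,\xi)\in\ker d\Phi|_{(t_0,\lambda)}=T_{(t_0,\lambda)}\Phi^{-1}(0)$ and projects to $\xi$, so the second projection $\Phi^{-1}(0)\arrow W_\R$ is a submersion at $(t_0,\lambda)$, and its image contains an open neighborhood $V$ of $\lambda$.

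To conclude, the open mapping property of submersions gives, after shrinking $U$, a $t\in U$ for every $\eta\in V$ on which $\eta$ is a Hodge class on $S_t$. Set $C:=\R_{>0}\cdot V$, a nonempty open cone in $W_\R$ containing $\lambda$; since being a Hodge class is preserved by positive real scaling, every $\eta\in C$, and in particular every $\eta\in H^2(S_{t_0},\Z)_{\van}\cap C$, becomes a Hodge class on $S_t$ for some $t\in U$. I expect the main delicate step to be the identification of the derivative of $\pi_t(\lambda)$ with the cup product $\mu_\lambda(n)$: this is the infinitesimal period relation for a variation of Hodge structure, and one must be careful to match the chosen trivialization of the quotient bundle with the definition of $\mu_\lambda$ given in the paper. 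Everything else is the implicit function theorem and an elementary scaling argument.
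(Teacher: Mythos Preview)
Your proof is correct and follows essentially the same approach as the paper: both reduce to a real $\lambda$ by Zariski density of the real form in $H^1(S,\Omega^1)_{\van}$, then show that the map from $\bigcup_{t\in U} H^{1,1}(S_t,\R)_{\van}$ to $H^2(S_{t_0},\R)_{\van}$ is a submersion at $\lambda$ and hence has open (conical) image. Your zero locus $\Phi^{-1}(0)$ with its second projection is exactly the paper's map $\tau_{1,\R}$, and your derivative computation $\partial_n\Phi|_{(t_0,\lambda)}=\mu_\lambda(n)$ is precisely the content of the paper's Lemma~\ref{submersion}; the only cosmetic difference is that the paper first proves the complex submersion statement for $\tau_1$ and then restricts to real classes, whereas you work over $\R$ from the outset.
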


For $Y$ smooth and $H^2(Y,O)=0$
(which implies that every element of $H_2(Y,\Z)$ is a Hodge class),
this was proved by Voisin \cite[Proposition 1]{Voisinint}.

\begin{proof}
The groups $H^2(S_t,\Z)$ form a weight-2 variation of Hodge
structures on the Hilbert scheme $\cal H$ of smooth surfaces $S_t\subset Y$.
Let $U$ be a contractible open neighborhood
of the given point $t_0$ in $\cal H$. We can canonically
identify $H^2(S_t,\C)$ with
$H^2(S_{t_0},\C)$ for all $t\in U$.
The surjectivity of $\mu_{\lambda}$ implies that the map from
$\cup_{t\in U}H^{1,1}(S_t,\R)_{\van}$
to $H^2(S_{t_0},\R)_{\van}$ is a submersion
at $\lambda$, I claim.

To prove that this map is a submersion, we
follow the argument of \cite[Proposition 1]{Voisinint},
modified so as not to assume that $H^2(Y,O)$ is zero.
Let $\pi\colon S_U\arrow U$ be the universal
<family of surfaces $S_t$, restricted to $t\in U$. Write $H^2_{\van}$ for 
the total space of the vector bundle $(R^2\pi_*\C)_{\van}$ over $U$,
with fibers $H^2(S_t,\C)_{\van}$. The Gauss-Manin connection gives a
trivialization of this bundle, and hence a projection map from
the total space to one fiber,
$\tau\colon H^2_{\van}\arrow H^2(S_{t_0},\C)_{\van}$.
Let $F^1H^2_{\van}$ be the submanifold
of $H^2$ whose fiber over each point $t\in U$ is the Hodge filtration
$$F^1H^2(S_t,\C)_{\van}=H^{2,0}(S_t)_{\van}\oplus H^{1,1}(S_t)_{\van}
\subset H^2(S_t,\C)_{\van}.$$
Let $\tau_1\colon
F^1H^2_{\van}\arrow H^2(S_{t_0},\C)_{\van}$ be the restriction of $\tau$
to $F^1H^2_{\van}$.
Let $\lambda$ be an element of $H^1(S_{t_0},\Omega^1)_{\van}$,
and $\widetilde{\lambda}$ any lift of $\lambda$
to $F^1H^2(S_{t_0},\C)_{\van}$. By the proof of Voisin
\cite[Lemma 2]{Voisinint} (modified since we are allowing $H^2(Y,O)$
to be nonzero), we have
the following equivalence:

\begin{lemma}
\label{submersion}
The map
$$\mu_{\lambda}\colon H^0(S_{t_0},N_{S_{t_0}/Y})\arrow H^2(S_{t_0},O)_{\van}$$
is surjective if and only if $\tau_1$ is a submersion
at $\widetilde{\lambda}$.
\end{lemma}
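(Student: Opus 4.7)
The plan is to compute the derivative $d\tau_1$ at $\widetilde{\lambda}$ and to match its ``horizontal'' component, modulo the Hodge filtration, with the cup-product map $\mu_{\lambda}$. First I would decompose the tangent space via the bundle projection $F^1H^2_{\van}\arrow U$, giving a short exact sequence
$$0\arrow F^1H^2(S_{t_0},\C)_{\van}\arrow T_{\widetilde{\lambda}}F^1H^2_{\van}\arrow T_{t_0}U\arrow 0.$$
On the vertical summand $F^1H^2(S_{t_0},\C)_{\van}$, $\tau_1$ is the identity since $\tau$ restricts to the identity on the fiber over $t_0$, so the image of $d\tau_1$ automatically contains $F^1H^2(S_{t_0},\C)_{\van}$. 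Hence $\tau_1$ is a submersion at $\widetilde{\lambda}$ if and only if the induced horizontal map
$$\overline{d\tau_1}\colon T_{t_0}U\arrow H^2(S_{t_0},\C)_{\van}/F^1H^2(S_{t_0},\C)_{\van}=H^2(S_{t_0},O)_{\van}$$
is surjective.

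Next I would identify $\overline{d\tau_1}$ with $\mu_{\lambda}$ using Griffiths' description of the infinitesimal period map. Given $v\in T_{t_0}U=H^0(S_{t_0},N_{S_{t_0}/Y})$, pick any holomorphic section $\sigma$ of $F^1H^2_{\van}\arrow U$ on a neighborhood of $t_0$ with $\sigma(t_0)=\widetilde{\lambda}$. In the Gauss-Manin trivialization write $\sigma(t)=\widetilde{\lambda}+\epsilon(t)$ with $\epsilon(t_0)=0$; then $\tau_1(\sigma(t))=\widetilde{\lambda}+\epsilon(t)$, so $\overline{d\tau_1}(v)=D_v\epsilon(t_0)\bmod F^1$. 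By Griffiths transversality the associated graded of the Gauss-Manin connection
$$\overline{\nabla}\colon F^1/F^2\otimes TU\arrow F^0/F^1$$
is $O_U$-linear, and is standardly identified, via $F^1/F^2\cong H^{1,1}$ and the Kodaira-Spencer map $T_{t_0}U\arrow H^1(S_{t_0},TS_{t_0})$, $n\mapsto\delta n$, with cup product into $H^{0,2}(S_{t_0})$. Since $\widetilde{\lambda}$ projects to $\lambda$ in $H^{1,1}(S_{t_0})_{\van}\subset F^1/F^2$, this yields $\overline{d\tau_1}(n)=(\delta n)\cdot\lambda=\mu_{\lambda}(n)$, and the equivalence follows.

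The main obstacle is to work throughout with vanishing cohomology, since unlike in Voisin's original setup we cannot assume $H^2(Y,O)=0$. The Hodge-theoretic splitting $H^2(S_t,\Q)=H^2(Y,\Q)\oplus H^2(S_t,\Q)_{\van}$ from Lemma \ref{structure} is a splitting of the variation of Hodge structure over $\cal H$: the $H^2(Y,\Q)$-summand is both Gauss-Manin flat (it is pulled back from the ambient 3-fold) and respected by the Hodge filtration, so the Gauss-Manin connection, Griffiths transversality, and the infinitesimal period map all restrict cleanly to the vanishing parts. With this bookkeeping in place, the two steps above combine to prove the lemma, essentially reproducing Voisin's Lemma 2 in \cite{Voisinint} after the restriction to $(\cdot)_{\van}$.
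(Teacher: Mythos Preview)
Your proof is correct and is precisely the argument the paper has in mind: the paper does not write out a proof of this lemma but simply cites Voisin's Lemma~2 in \cite{Voisinint} ``modified since we are allowing $H^2(Y,O)$ to be nonzero,'' and what you have written is exactly that modification. Your splitting of $T_{\widetilde{\lambda}}F^1H^2_{\van}$ into vertical and horizontal parts, the identification of the induced horizontal map with the graded Gauss--Manin/infinitesimal period map via Griffiths transversality, and the use of the flat Hodge-compatible decomposition $H^2(S_t,\Q)=H^2(Y,\Q)\oplus H^2(S_t,\Q)_{\van}$ to restrict everything to vanishing cohomology, together reproduce Voisin's argument in the required generality.
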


To relate Lemma \ref{submersion}
to cohomology with real coefficients, note
that surjectivity of $\mu_{\lambda}$ is a Zariski open 
condition on $\lambda$ in $H^1(S_{t_0},\Omega^1)_{\van}$. The vector space
$H^1(S_{t_0},\Omega^1)_{\van}$ has a real structure, given by
$$H^{1,1}(S_{t_0})_{\R,\van}=H^1(S_{t_0},\Omega^1)_{\van}\cap
H^2(S_{t_0},\R)_{\van}
\subset H^2(S_{t_0},\C)_{\van}.$$
Since we assume in this Corollary that
$\mu_{\lambda}$ is surjective for one $\lambda$ in
$H^1(S_{t_0},\Omega^1)_{\van}$, it is surjective for some
$\lambda$ in $H^{1,1}(S_{t_0})_{\R,\van}$.

In Lemma \ref{submersion}, take the lifting $\widetilde{\lambda}$
to be $\lambda$ itself. Then $\widetilde{\lambda}$ is real,
and so is $\tau_1(\widetilde{\lambda})$. By our assumption
on $\lambda$, Lemma \ref{submersion} gives that
$\tau_1$ is a submersion at $\widetilde{\lambda}$, and so the restriction
$$\tau_{1,\R}\colon H^{1,1}_{\R,\van}\arrow H^2(S_{t_0},\R)_{\van}$$
of $\tau_1$ to $\tau_1^{-1}(H^2(S_{t_0},\R)_{\van})$ is also
a submersion. Here $\tau_1^{-1}(H^2(S_{t_0},\R)_{\van})$ is identified
with
$$\cup_{t\in U}F^1H^2(S_t,\C)_{\van}\cap H^2(S_t,\R)_{\van}=\cup_{t\in U}
H^{1,1}(S_t,\R)_{\van}=: H^{1,1}_{\R,\van}.$$
Since $\tau_{1,\R}$ is a submersion at $\widetilde{\lambda}$
on the real manifold $H^{1,1}_{\R,\van}$ (a real vector bundle over $U$),
the image of $\tau_{1,\R}$ contains a nonempty open subset
of $H^2(S_{t_0},\R)_{\van}$, as we wanted.

The image of $\tau_{1,\R}$ is a cone, and so it
contains an open cone $C$
in $H^2(S_{t_0},\R)_{\van}$. Therefore,
all elements of $H^2(S_{t_0},\Z)_{\van}$
in the open cone $C$ become Hodge classes on $S_t$ for some $t$ in $U$.
Corollary \ref{surjective} is proved.
\end{proof} 

\begin{proposition}
\label{hodge}
Let $Y$ be a terminal projective complex 3-fold
with trivial canonical bundle.
Write $S_{t_0}$ for the surface in $Y$ corresponding
to a point $t_0$ in $\cal H$,
with inclusion $i\colon S_{t_0}\arrow Y$. Write
$H_2(S_{t_0},\Z)_{\van}=\ker(i_*\colon H_2(S_{t_0},\Z)\arrow
H_2(Y,\Z))$. Then there is a nonempty open cone $C$
in $H^2(S_{t_0},\R)_{\van}$ and
a contractible open neighborhood $U$
of $t_0$ in $\cal H$
such that every element of $H^2(S_{t_0},\Z)_{\van}\cap C$
becomes a Hodge class on $S_t$ for some $t$ in $U$.
\end{proposition}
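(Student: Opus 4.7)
By Corollary \ref{surjective}, the proposition reduces to exhibiting, for $S$ a smooth surface in some sufficiently ample linear system $|H|$ on $Y$, a class $\lambda\in H^1(S,\Omega^1_S)_{\van}$ such that the cup-product map $\mu_\lambda\colon H^0(S,N_{S/Y})\to H^2(S,O_S)_{\van}$ is surjective. Since we are free to replace $H$ by any large multiple, I plan to work with $H^{\otimes d}$ for $d$ sufficiently large.

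The first move is to make $\mu_\lambda$ concrete using the triviality of $K_Y$. Adjunction gives $N_{S/Y}\cong K_S$, and Serre duality on $S$ gives $H^2(S,O_S)\cong H^0(S,K_S)^*$. As recorded before Corollary \ref{surjective}, the vanishing subspace $H^0(S,\Omega^2_S)_{\van}$ equals the kernel of the pushforward $H^0(S,K_S)\to H^1(Y,K_Y)$, and dually $H^2(S,O_S)_{\van}$ is a quotient of $H^0(S,K_S)^*$. Under these identifications, $\mu_\lambda$ becomes a bilinear pairing on $H^0(S,K_S)$ depending on $\lambda$, and surjectivity of $\mu_\lambda$ onto $H^2(S,O_S)_{\van}$ translates into a Macaulay-type nondegeneracy statement for this pairing, i.e.\ a statement in a Jacobian-style ring associated to the Koszul complex of the embedding $S\subset Y$.

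My plan for the verification is to follow the Voisin / H\"oring-Voisin strategy \cite{Voisinint, HV}. For $d\gg 0$, Kodaira-Akizuki-Nakano vanishing on $S$ together with Koszul-theoretic vanishing on $Y$ should reduce the surjectivity of $\mu_\lambda$ to the surjectivity of multiplication maps of the form $H^0(S,K_S)\otimes H^0(S,K_S)\to H^0(S,K_S^{\otimes 2})$, which holds for $d\gg 0$ by Castelnuovo-Mumford regularity. The case split in Lemma \ref{structure} then handles the geometry of $Y$: when $H^1(Y,O_Y)=0$, the argument of H\"oring-Voisin \cite[Proposition 3.18]{HV} essentially carries over, using that $S$ lies in the smooth (and hence lci) locus of $Y$ so the vanishing theorems all apply; when $H^1(Y,O_Y)\neq 0$, Lemma \ref{structure} tells us $Y$ is smooth, and standard vanishing on smooth projective 3-folds applies.

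The hardest step will be the case $H^2(Y,O_Y)\neq 0$. Voisin's original argument for hypersurfaces in $\P^4$ could ignore this contribution, but here the target $H^2(S,O_S)_{\van}$ is a proper quotient of $H^2(S,O_S)$, and one must show $\mu_\lambda$ surjects onto this quotient rather than picking up spurious components pulled back from $Y$. The essential ingredient is the hypothesis $h^0(K_Y)>0$: a nonzero section of $K_Y$ trivializes $K_Y$ globally on $Y$ and canonically identifies $\mu_\lambda$ with a genuine multiplication map in the canonical ring of $S$, at which point Castelnuovo-Mumford delivers the surjectivity for $d\gg 0$. The technical core will be checking that the vanishing decomposition of $H^2(S,O_S)$ is correctly preserved under this identification, so that the image of $\mu_\lambda$ lies in $H^2(S,O_S)_{\van}$ and surjects onto it.
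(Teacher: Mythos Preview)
Your reduction to Corollary \ref{surjective} and the identifications $N_{S/Y}\cong K_S$, $V\cong V'$ via the trivialization of $K_Y$ are correct, and the paper sets things up the same way. But the plan after that has a genuine gap.

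You propose to reduce surjectivity of $\mu_\lambda$ for generic $\lambda$ to surjectivity of a multiplication map $H^0(S,K_S)\otimes H^0(S,K_S)\to H^0(S,K_S^{\otimes 2})$ via Koszul/Castelnuovo--Mumford vanishing. These are not the same statement. Dualizing, the question is whether the \emph{generic quadric} in the linear system $q\colon H^1(S,\Omega^1_S)_{\van}\to S^2V^*$ is smooth, i.e.\ whether $q(\lambda)$ is a nondegenerate symmetric form on $V$ for generic $\lambda$. Surjectivity of the multiplication map only says the map $S^2V\to H^1(S,\Omega^1_S)_{\van}^{\,*}$ has large image; it does not control the rank stratification of $\im(q)$ inside $S^2V^*$, and in particular does not rule out that every quadric in $\im(q)$ is singular. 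The Macaulay/Jacobian-ring heuristic you invoke is available for hypersurfaces in $\P^n$ (Griffiths' residue theory), but there is no analogous perfect-pairing structure for a surface in a general 3-fold $Y$, which is precisely why Voisin's original argument for Calabi--Yau 3-folds already needed a different idea.

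The paper's proof supplies that missing idea: one degenerates $S\in|nH|$ to a determinantal nodal surface $S_0=\{\det A=0\}$ with $N\sim n^3$ ordinary double points, and shows (following Voisin) that the limit of $\im(q_t)$ contains the evaluation-at-nodes quadrics. This forces the base locus of $\im(q)$ to have dimension $O(n^2)$. One then applies a Bertini-type criterion (Lemma \ref{symmetric}): the generic quadric is smooth unless there is a subvariety $Z$ of the base locus with $\rank(\mu_v)\le\dim Z$ for all $v\in Z$. Two rank estimates (one from H\"oring--Voisin bounding the locus where $\rank\mu_v\le cn^2$ by a constant $A$, and the new Lemma \ref{rank} showing the locus where $\rank\mu_v<A$ is $\{0\}$) then rule out such a $Z$. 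None of this degeneration/rank-bounding machinery appears in your outline, and without it there is no route from ``multiplication is surjective'' to ``$\mu_\lambda$ is surjective for generic $\lambda$.''

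A secondary point: you plan to use the dichotomy of Lemma \ref{structure} to split into the cases $H^1(Y,O)=0$ and $Y$ smooth. The paper explicitly avoids this split and gives a uniform argument; the role of $K_Y\cong O_Y$ is not to reduce to a smooth $Y$, but to force $\dim V=\dim V'$ so that surjectivity of $\mu_\lambda|_{V'}$ is equivalent to the quadric $q(\lambda)$ being smooth---as the paper says, ``the argument just barely works.''
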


\begin{proof}
Let $H$ be a very ample line bundle on $Y$ and let $S$ be a smooth
surface in $|nH|$ for a positive integer $n$. (We will eventually
take $n$ big enough and $S$ general in $|nH|$.)
Let $V=H^0(S,K_S)_{\van}$ and $V'=H^0(Y,O(S))/H^0(Y,O)$.
By the exact sequence of sheaves on $Y$
$$0\arrow O_Y\arrow O(S)\arrow O(S)|_S\arrow 0,$$
we can view $V'$ as a subspace of $H^0(S,O(S)|_S)=H^0(S,N_{S/Y})$.
For $n$ sufficiently large, the long exact sequence of cohomology gives
an exact sequence
$$0\arrow V'\arrow H^0(S,N_{S/Y})\arrow H^1(Y,O)\arrow 0.$$
Likewise, by definition of $V$, we have
an exact sequence
$$0\arrow V\arrow H^0(S,K_S)\arrow H^1(Y,K_Y)\arrow 0,$$
where the pushforward map shown is the boundary map
from the exact sequence of sheaves on $Y$:
$$0\arrow K_Y\arrow K_Y(S)\arrow K_S\arrow 0.$$

We will only need to move $S$ in its linear system
(although $H^1(Y,O)$ need not be zero). That is,
we will show that for a general $\lambda\in H^1(S,\Omega^1_S)_{\van}$
the restriction of $\mu_{\lambda}$ to $V'\subset H^0(S,N_{S/Y})$
maps onto $H^2(S,O)_{\van}=V^*$; by Corollary \ref{surjective},
that will finish the proof of Proposition \ref{hodge}.
We will see that these two vector spaces have the same dimension, using
that $K_Y$ is trivial, and so the argument just barely works.
(For $K_Y$ more positive, it would not work at all.)

Fix a trivialization of the canonical bundle $K_Y$.
This gives an isomorphism between the two short exact sequences
of sheaves above, in particular an isomorphism $K_S\cong N_{S/Y}$
of line bundles on $S$. So we have an isomorphism between
the two exact sequences of cohomology, including
an isomorphism $V\cong V'$. 
In terms of this 
identification, $\mu_{\lambda}$ for $\lambda$ in $H^1(S,\Omega^1)_{\van}$
is a linear map $V'\arrow V^*$. For varying $\lambda$,
this is equivalent to the pairing
\begin{align*}
\mu\colon V\times V'&\arrow H^1(S,\Omega^1)_{\van}\\
\mu(v,v')&=v(v'\delta),
\end{align*}
which is symmetric. (Recall that
$\delta$ is the class of the extension $0\arrow TS
\arrow TY|_S\arrow N_{S/Y}\arrow 0$ in $H^1(S,N_{S/Y}^*\otimes
TS)$.) (Proof: this pairing is the restriction of a symmetric
pairing $H^0(S,N_{S/Y})\otimes H^0(S,N_{S/Y})\arrow H^1(S,\Omega^1)$,
given by $u\otimes v\mapsto uv\gamma\delta$, where $u,v\in
H^0(S,N_{S/Y})$, and $\gamma\in H^0(S,N_{S/Y}^*\otimes (TS)^*
\otimes \Omega^1_S)$ is the natural map
$N_{S/Y}\otimes TS\cong \Omega^2_S\otimes TS\arrow \Omega^1_S$
of bundles on $S$.)
Serre duality $H^1(S,\Omega^1)_{\van}^*\cong H^1(S,\Omega^1)_{\van}$
gives a dual map
$$q=\mu^*\colon H^1(S,\Omega^1)_{\van}\arrow S^2V^*.$$
We can think of $q$ as a linear system of quadrics
in the projective space $P(V^*)$ of lines in $V$.
The condition that $\mu_{\lambda}$ from $V'\subset H^0(S,N_{S/Y})$
to $V^*=H^2(S,O)_{\van}$
is surjective for generic
$\lambda\in H^1(S,\Omega^1)_{\van}$
is equivalent to the condition that the quadric
defined by $q(\lambda)$ is smooth for generic $\lambda$.
Thus, by Corollary \ref{surjective}, Proposition \ref{hodge}
will follow if we can show
that the quadric $q(\lambda)$ is smooth for generic $\lambda$.

Note that we lose nothing by restricting the pairing
$\mu$ to the subspaces $V\subset H^0(S,K_S)$
and $V'\subset H^0(S,N_{S/Y})$. Indeed, as discussed
in the proof of Lemma \ref{structure}, the Hodge structure
$H^2(S,\Q)$ is polarized by the intersection form,
and the restriction $H^2(Y,\Q)\arrow H^2(S,\Q)$
is injective, with image a sub-Hodge structure. Therefore
$H^2(S,\Q)$ is the orthogonal direct sum of $H^2(Y,\Q)$
and its orthogonal complement. This gives a splitting
of each Hodge-graded piece of $H^2(S,\C)$. For example,
for $H^0(S,K_S)$, this gives the decomposition
$$H^0(S,K_S)\cong H^0(Y,\o^2_Y)\oplus H^0(S,K_S)_{\van}.$$

Therefore, we also have a canonical decomposition
of the isomorphic vector space $H^0(S,N_{S/Y})$.
This is the decomposition
$$H^0(S,N_{S/Y})\cong H^0(Y,TY)\oplus H^0(Y,O(S))/H^0(Y,O).$$
Thinking of $H^0(S,N_{S/Y})$ as the first-order
deformation space of $S$ in $Y$, these two subspaces
correspond to: moving $S$ by automorphisms of $Y$,
and moving $S$ in its linear system. The first type
of move does not change the Hodge structure of $S$,
and so it is irrelevant to our purpose (trying
to make a given integral cohomology class on $S$
into a Hodge class).

We use the following consequence of Bertini's theorem
from Voisin \cite[Lemma 15]{Voisinint}, which we apply
to our space $V$ (identified with $V'$)
and $W=H^1(S,\Omega^1)$. (Note that we follow the numbering
of statements in the published version of \cite{Voisinint},
not the preprint.)

\begin{lemma}
\label{symmetric}
Let $\mu\colon V\otimes V\arrow W$ be symmetric
and let $q\colon W^*\arrow S^2V^*$ be its dual.
For $v$ in $V$, write $\mu_v\colon V\arrow W$
for the corresponding linear map.
Think of $q$ as a linear system of quadrics in $\P(V^*)$.
Then the generic quadric in $\im(q)$ is smooth
if the following condition holds. There is no closed subvariety
$Z\subset \P(V^*)$
contained in the base locus of $\im(q)$
and satisfying:
$$\rank(\mu_v)\leq \dim(Z)$$
for all $v\in Z$.
\end{lemma}

We have to show that such a subvariety $Z$ does not exist
for a general surface $S\in |nH|$ with $n$ sufficiently divisible.
We follow the outline of H\"oring and Voisin's argument
(\cite[after Lemma 3.35]{HV}, extending \cite[after Lemma 7]{Voisinint}
in the smooth case). After replacing the very ample
line bundle $H$ by a multiple if necessary, we can assume that
$H^i(Y,O(lH))=0$ for $i>0$ and $l>0$.
We degenerate the general
surface $S$ to a surface with many nodes, as follows.
Consider a general symmetric $n\times n$ matrix $A$
with entries in $H^0(Y,O(H))$. Let $S_0$ be the surface in $Y$ defined
by the determinant of $A$ in $H^0(Y,O(nH))$. By the assumption
of generality, $S_0$ is contained in the smooth locus
of $Y$. By Barth \cite{Barth},
the singular set of $S_0$ consists of 
$N$ nodes, where
$$N=\binom{n+1}{3}H^3.$$

Let ${\cal S}\arrow \Delta$ be a Lefschetz degeneration
of surfaces
$S_t\in |nH|$ over the unit disc $\Delta$ such that
the central fiber $S_0$ has nodes $x_1,\ldots,x_N$
as singularities. The map $q$ above makes sense
for any smooth surface $S$ in a 3-fold.
Voisin showed that the limiting space
$$\lim_{t\arrow 0} \im(q_t\colon H^1(S_t,\Omega^1_{S_t})\arrow
(H^0(S_t,K_{S_t})\otimes H^0(S_t,O(nH)))^*),$$
which is a linear subspace of $(H^0(S_0,K_{S_0})\otimes H^0(S_0,O(nH)))^*$,
contains for each $1\leq i\leq N$ the multiplication-evaluation
map which is the composite
$$H^0(S_0,K_{S_0})\otimes H^0(S_0,O(nH))\arrow H^0(S_0,K_{S_0}(nH))
\arrow K_{S_0}(nH)|_{x_i}$$
\cite[Lemma 7]{Voisinint}.

Recall that we have identified $V=H^0(S,K_S)_{\van}$ with $V'=
H^0(Y,O(S))/H^0(Y,O)$.
When we degenerate a general surface $S$ to the nodal
surface $S_0$, the base locus $B\subset P(V^*)$ of $\im(q)$ specializes
to a subspace of the base locus $B_0$ of $\im(q_0)\subset P(V_0^*)$,
where 
$$V_0=H^0(S_0,K_{S_0})\cong V'_0=H^0(S_0,O_{S_0}(nH)).$$
Let $W$ be the set of nodes of $S_0$.
By Voisin's lemma
just mentioned, $B_0$ is contained in
$$C_0:=\{ v\in P(V^*_0): v^2|_W=0\}.$$
As a set, $C_0$ is a linear subspace:
\begin{align*}
C_0 &=\{ v\in P(V^*_0):v|_W=0\}\\
&= P(H^0(S_0,K_{S_0}\otimes I_W)^*).
\end{align*}
By \cite[eq.\ 3.36]{HV}, extending \cite[Corollary 3]{Voisinint}
in the smooth case (using only that $H^i(Y,O(lH))=0$ for $i>0$ and $l>0$),
we have $h^0(Y,K_Y(nH)\otimes I_W)\leq cn^2$
for some constant $c$ independent of $n$. Thus the base locus
$B_0$ of $\im(q_0)$ has dimension at most
$cn^2$, for some constant $c$ independent of $n$. By specializing,
the base locus $B$ of $\im(q)$ also has dimension at most
$cn^2$ for general surfaces $S$ in $|nH|$.

By our assumption on the subvariety $Z$ of $B$,
for $v\in Z$ we have
\begin{align*}
\rank(\mu_v\colon V\arrow H^1(S,\Omega^1))&\leq \dim(Z)\\
&\leq cn^2.
\end{align*}
By the following lemma,
it follows that $\dim(Z)\leq A$ for some constant $A$
independent of $n$. This is H\"oring-Voisin's \cite[Lemma 3.37]{HV},
extending \cite[Lemma 12]{Voisinint} in the smooth case.
(As before, we follow the numbering from the published version
of \cite{Voisinint}.
In our case, $H^2(Y,O)$ need not be zero, but that is not used
in these proofs.)

\begin{lemma}
Let $Y$ be a Gorenstein projective 3-fold with isolated
canonical singularities, $H$ as above.
For each positive integer $n$, let $S$ be a general surface
in $|nH|$, and define $V,V',\mu$ associated to $S$ as above.
Let $c$ be any positive constant. Then there is a constant
$A$ such that the sets
$$\Gamma=\{v\in V: \rank(\mu_v)\leq cn^2\}$$
and
$$\Gamma'=\{v'\in V': \rank(\mu_{v'})\leq cn^2\}$$
both have dimension bounded by $A$, independent of $n$.
\end{lemma}

By our assumption on the subvariety $Z$ of $B$ again, for $v\in Z$
we have
\begin{align*}
\rank(\mu_v\colon V\arrow H^1(S,\Omega^1))&\leq \dim(Z)\\
&\leq A.
\end{align*}
This implies that $Z$ is empty by Lemma \ref{rank}, to be proved
next.
But $Z$ is a variety, so we have a contradiction.
This completes the proof that
the generic quadric in the linear system $\im(q)$ is smooth.
Proposition \ref{hodge} is proved.
\end{proof}

To complete the proof of Proposition \ref{hodge} and hence
Theorem \ref{minimal}, it remains to prove the following lemma.

\begin{lemma}
\label{rank}
Let $Y$ be a terminal projective 3-fold with $K_Y$ trivial,
$H$ as above.
Let $A$ be a positive integer. Let $S\in |nH|$ be general,
with $n$ large enough (depending on $A$). Let $V=H^0(S,K_S)_{\van}$
and $\mu_v\colon V'\arrow H^1(S,\Omega^1)$ the product with an
element $v\in V$,
as defined above. Then the set
$$W=\{v\in V:\rank(\mu_v)<A\}$$
is equal to $0$.
\end{lemma}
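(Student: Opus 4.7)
The plan is to interpret $\mu_v$, up to a controlled subspace, as the multiplication map $v'\mapsto vv'$ of sections on $S$, and then exploit the injectivity of such multiplication on the integral surface $S$ to force the rank to grow with $n$.

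First I would make the multiplicative interpretation precise. Using the trivialization of $K_Y$ that identifies $K_S\cong N_{S/Y}$ and $V\cong V'\subset H^0(S,K_S)$, together with a Carlson--Griffiths-type description of the IVHS for $S\subset Y$, the symmetric pairing $\mu(v,v')=vv'\gamma\delta$ factors as
$$V'\xrightarrow{\,m_v\,} H^0(S,K_S^{\otimes 2})\xrightarrow{\,\pi\,}H^1(S,\Omega^1)_{\van},$$
where $m_v(v')=vv'$ and $\pi$ has kernel a ``Jacobian-type'' subspace $J\subset H^0(S,K_S^{\otimes 2})$ arising from the Kodaira--Spencer class $\delta$. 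Consequently $\rank(\mu_v)=\dim V'-\dim\{v'\in V': vv'\in J\}$.

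Next I would bound this kernel. For nonzero $v\in V$, the map $m_v$ is injective: since $S$ is a smooth irreducible surface, a product $vv'$ of two nonzero sections of line bundles on $S$ is itself nonzero. What remains is to show that the colon space $\{v'\in V':vv'\in J\}$ has dimension $O(n^2)$, while $\dim V'=h^0(Y,O(nH))-1$ grows like $n^3$. Then for $n$ large enough (depending on $A$), $\rank(\mu_v)>A$ for every nonzero $v\in V$, yielding $W=\{0\}$ as claimed.

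The main obstacle is the uniform codimension estimate $\dim\{v'\in V': vv'\in J\}=O(n^2)$. This is essentially Voisin's \cite[Lemma 13]{Voisinint} in the smooth 3-fold case, and H\"oring--Voisin's \cite[Lemma 3.37]{HV} in the Gorenstein case. Their approach is to restrict to a general curve $C\subset S$ cut out by a two-dimensional subspace $L\subset V'$ (smooth and irreducible for $n$ large, by Bertini) and invoke the base-point-free pencil trick: the multiplication map on $C$ is then controlled by the degree of $K_S|_C\sim n^3$ on the curve, which dominates any fixed rank bound $A$. I expect to follow their argument closely; the only adaptations needed are bookkeeping modifications to accommodate the vanishing subspaces $V\subset H^0(S,K_S)$ and $V'\subset H^0(S,N_{S/Y})$, whose codimensions inside the full spaces are bounded independently of $n$ (by $h^0(Y,K_Y)=1$ and $h^0(Y,TY)$ respectively), and a check that the klt and rational singularity properties of $Y$ ensure the cohomological vanishings on $Y$ needed for the Castelnuovo--Mumford regularity input.
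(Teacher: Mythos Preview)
Your multiplicative interpretation of $\mu_v$ is correct and is where the argument begins, but the step you defer to a citation is misattributed, and the approach you sketch is not how this lemma is proved.

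H\"oring--Voisin's Lemma 3.37 (extending Voisin's Lemma 12) is the statement that the locus $\{v:\rank(\mu_v)\leq cn^2\}$ has dimension bounded by a constant $A$ independent of $n$. In the paper's argument that is the \emph{previous} step of Proposition \ref{hodge}, producing the constant $A$ that then appears in the hypothesis of the present lemma. It does not assert a uniform $O(n^2)$ bound on the colon space $\{v':vv'\in J\}$ valid for every nonzero $v$; such a bound would yield $\rank(\mu_v)\gtrsim n^3$ for all nonzero $v$, far stronger than anything proved in the references. The restriction-to-a-curve and pencil-trick techniques you invoke do appear in this circle of ideas, but they go into the proof of Lemma 3.37, not into the present statement.

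The actual proof (following Voisin's Lemma 13 and H\"oring--Voisin after their Proposition 3.40) is a Serre-duality argument, not a direct rank estimate. One introduces the composite boundary map
\[
\delta=\delta_2\circ\delta_1\colon H^0(S,K_S(2nH))\longrightarrow H^1(S,\Omega^1_S(nH))\longrightarrow H^2(S,O_S)
\]
coming from the two conormal exact sequences on $S$. The H\"oring--Voisin input, used as a black box, is: if $\rank(\mu_v)<A$ and $n$ is large, then $v$ is Serre-orthogonal to $\im(\delta)$. The new work here---and precisely where one must confront the possibility $h^{0,2}(Y)\neq 0$---is to compute $\im(\delta)$. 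One shows $\delta_1$ is surjective for $n$ large, and identifies the cokernel of $\delta_2$ with $H^0(S,\Omega^2_Y|_S)^*\cong H^0(Y,\Omega^{[2]}_Y)^*\cong H^0(Y,\o^2_Y)^*$, so that $\im(\delta)=H^2(S,O)_{\van}$ exactly. Since $V=H^0(S,K_S)_{\van}$ is Serre-dual to $H^2(S,O)_{\van}$, the orthogonality forces $v=0$. Your proposal never reaches this identification of $\im(\delta)$, which is the heart of the lemma when $H^2(Y,O)\neq 0$.
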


\begin{proof}
We have to modify the proof of Voisin's Lemma 13 \cite{Voisinint}
to allow $Y$ to be singular and also to have $H^2(Y,O)$ not zero.
We use H\"oring and Voisin's ideas on how to deal with $Y$ being
singular, by working on the smooth surface $S$
as far as possible \cite[proof of Proposition 3.22]{HV}.

Let $S$ be a smooth surface in $|nH|$.
Consider the following exact sequences of vector bundles on $S$,
constructed from the normal bundle sequence of $S$ in $Y$:
$$0\arrow \Omega^1_S(nH)\arrow \Omega^2_Y|_S(2nH)\arrow K_S(2nH)\arrow 0$$
and
$$0\arrow O_S\arrow \Omega^1_Y|_S(nH)\arrow \Omega^1_S(nH)\arrow 0.$$
Let $\delta_1$ and $\delta_2$ be the resulting boundary maps:
$$\delta_1\colon H^0(S,K_S(2nH))\arrow H^1(S,\Omega^1_S(nH))$$
and
$$\delta_2\colon H^1(S,\Omega^1_S(nH))\arrow H^2(S,O).$$
Let $\delta=\delta_2\circ \delta_1\colon H^0(S,K_S(2nH))
\arrow H^2(S,O)$.

\begin{lemma}
\label{imdelta}
The image of $\delta$ is $H^2(S,O)_{\van}$,
for large enough $n$
and any $S$ as above.
\end{lemma}

\begin{proof}
We first show that $\delta_1$ is surjective.
By the long exact sequence of cohomology
associated to the first exact sequence above, it suffices
to show that $H^1(S,\Omega^2_Y|_S(2nH))$ is zero. In terms
of the sheaf $\Omega^{[2]}_Y$ of reflexive differentials,
we have an exact sequence of sheaves on $Y$:
$$0\arrow \Omega^{[2]}_Y(nH)\arrow \Omega^{[2]}_Y(2nH)
\arrow \Omega^2_Y|_S(2nH)\arrow 0.$$
By Serre vanishing on $Y$, both $H^1(Y,\Omega^{[2]}_Y(2nH))$
and $H^2(Y,\Omega^{[2]}_Y(nH))$ vanish for large $n$,
and so $H^1(S,\Omega^2_Y|_S(2nH))=0$ for all smooth $S$
in $|nH|$ with $n$ large.

Next, the long exact sequence involving $\delta_2$ shows
that the cokernel of $\delta_2$ is contained
in $H^2(S,\Omega^1_Y|_S(nH))$. Since $K_S=nH|_S$
by the adjunction formula, the dual of that $H^2$ space
is $H^0(S,TY|_S)=H^0(S,\Omega^2_Y|_S)$. 

By the exact sequence
$$0\arrow \Omega^{[2]}_Y(-nH)\arrow \Omega^{[2]}_Y
\arrow \Omega^2_Y|_S\arrow 0$$
of sheaves on $Y$,
we have an exact sequence
$$H^0(X,\Omega^{[2]}_Y(-nH))\arrow H^0(Y,\Omega^{[2]}_X)
\arrow H^0(S,\Omega^{[2]}_Y|_S)\arrow H^1(Y,\Omega^{[2]}_Y(-nH)).$$
Since $Y$ is normal, the sheaf $\Omega^{[2]}_Y$ is reflexive,
and $\dim(Y)>1$, the groups on the left and right are zero
for $n$ large. (Consider an embedding of $Y$ into some $\P^N$
and use Serre vanishing and Serre duality on $\P^N$, as in
\cite[proof of Corollary III.7.8]{Hartshorne}.) So
the map $H^0(X,\Omega^{[2]})\arrow H^0(S,\Omega^2_Y|_S)$
is an isomorphism. By the results above on $\delta_1$
and $\delta_2$, this gives an exact sequence
$$H^0(S,K_S(2nH))\xrightarrow[\delta]{} H^2(S,O)
\arrow H^0(Y,\Omega^{[2]})^*.$$

Finally, we need to rephrase this in terms of du Bois's
object $\o^2_Y$ in the derived category of $Y$.
The cohomology sheaves of $\o^j_Y$ are
in degrees $\geq 0$, and the 0th cohomology sheaf
is $\Omega^{[j]}_Y$ because $Y$ is klt \cite[Theorems 5.4
and 7.12]{HJ}. 
So there is a natural map $\Omega^{[2]}_Y\arrow
\o^2_Y$ in $D(Y)$. Because the other cohomology sheaves
of $\o^2_Y$ are in degrees $>0$, it is immediate
that the map $H^0(Y,\Omega^{[2]})\arrow H^0(Y,\o^2_Y)$
is an isomorphism. So the previous paragraph yields
an exact sequence:
$$H^0(S,K_S(2nH))\xrightarrow[\delta]{} H^2(S,O)
\arrow H^0(Y,\o^2_Y)^*.$$
Equivalently, the image of $\delta$ is $H^2(S,O)_{\van}$.
\end{proof}

Assume that $v\in V$
satisfies the condition that $\rank(\mu_v)<A$.
Using that $n$ is sufficiently large, H\"oring and Voisin show
that $\delta(H^0(S,O_S(3nH)))$
is orthogonal to $v$ with respect to Serre duality
\cite[after Proposition 3.40]{HV}. By Lemma \ref{imdelta},
$H^2(S,O)_{\van}$ is orthogonal to $v$.
Since $V=H^0(S,K_S)_{\van}$ is dual to $H^2(S,O)_{\van}$, it follows
that $v=0$. Lemma \ref{rank} is proved. This also completes
the proofs of Proposition \ref{hodge} and Theorem \ref{minimal}.
\end{proof}

\section{The integral Tate conjecture for 3-folds}

We now prove the integral Tate conjecture for 3-folds
in characteristic zero that are rationally connected
or have Kodaira dimension zero
with $h^0(X,K_X)>0$ (Theorem \ref{tatezero}).
In any characteristic, we will prove the integral Tate conjecture
for abelian 3-folds (Theorem \ref{abeliantate}).

\begin{theorem}
\label{tatezero}
Let $X$ be a smooth projective 3-fold
over the algebraic closure of a finitely generated
field of characteristic zero. 
If $X$ is rationally connected or it has Kodaira dimension zero
with $h^0(X,K_X)>0$ (hence equal to 1), then $X$ satisfies
the integral Tate conjecture.
\end{theorem}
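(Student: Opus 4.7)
The plan is to deduce the integral Tate conjecture from the integral Hodge conjecture via the standard spreading-out argument in characteristic zero. The integral Hodge conjecture holds for rationally connected 3-folds by Voisin \cite{Voisinint}, and for 3-folds of Kodaira dimension zero with $h^0(K_X)>0$ by Theorem \ref{minimal}. Codimensions $0$ and $3$ are trivial, and codimension $1$ follows, via the Kummer sequence, from the Tate conjecture for divisors together with the finiteness of the relevant (open-Galois-fixed) transcendental Brauer classes, along the lines of \cite{Totaro}. The content is therefore codimension $2$.

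Let $\alpha\in H^4_\et(X_F,\Z_l(2))$ be fixed by some open subgroup of $\Gal(F/k)$. After enlarging $k$ by a finite extension we may assume $\alpha$ is $\Gal(F/k)$-invariant. I would spread $X$ out to a smooth projective morphism $\pi\colon\mathcal{X}\arrow B$ with $B$ a smooth geometrically integral $\Q$-scheme of finite type with function field $k$, so that $X=\mathcal{X}_{\overline{\eta}}$. After shrinking $B$, the $l$-adic sheaf $R^4\pi_*\Z_l(2)$ is lisse on $B$, and $\alpha$ extends to a global section $\widetilde{\alpha}$. Choose a complex point $b\in B(\C)$. The fiber $X_b$ is then a smooth projective 3-fold over $\C$; shrinking $B$ further (using openness of rational connectedness in smooth projective families, and semicontinuity of plurigenera for the Kodaira zero case), we may assume $X_b$ is rationally connected in the first case or has Kodaira dimension zero with $h^0(K_{X_b})>0$ in the second. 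Via the comparison isomorphism, $\widetilde{\alpha}_b$ gives an element of $H^4(X_b^{\mathrm{an}},\Z_l(2))$, which modulo torsion lies in $H^4(X_b^{\mathrm{an}},\Z)\otimes\Z_l$.

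The Galois-invariance of $\alpha$ produces a flat section of the Gauss--Manin local system on $B(\C)$; by a Deligne-style argument (Principle B applied integrally modulo $l$-adic torsion, as in \cite{Totaro}), this section is of Hodge type $(2,2)$ at every complex point, so it is an integral Hodge class on $X_b$ up to torsion. The integral Hodge conjecture for $X_b$ (Voisin or Theorem \ref{minimal}) then realises $\widetilde{\alpha}_b$ as a $\Z_l$-combination of cycle classes of algebraic 1-cycles on $X_b$. A Hilbert-scheme / specialisation argument (for very general complex $b$, any algebraic 1-cycle on $X_b$ extends to a flat family of 1-cycles over a Zariski open neighbourhood of $b$, hence restricts back to $X$) then produces an algebraic 1-cycle on $X$ whose $l$-adic class equals $\alpha$. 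The main obstacle will be the integrality: the comparison between $\Z_l$-\'etale cohomology and $\Z$-Betti cohomology tensored with $\Z_l$ can differ by a $\text{Tor}$ contribution coming from torsion in $H_1(X_b,\Z)$, and promoting a flat Galois-invariant $\Z_l$-class to a genuine integral Hodge class (rather than Hodge only modulo torsion) requires separately analysing torsion cycle classes in $H^4(X_b,\Z)$. This torsion bookkeeping is the technical heart of the reduction, and is exactly the strategy of \cite{Totaro}.
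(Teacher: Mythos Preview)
Your spreading-out strategy has a genuine gap in the codimension-$2$ step. You assert that Galois-invariance of $\alpha$ --- equivalently, monodromy-invariance of the corresponding flat section over $B(\C)$ --- forces the class at $b$ to be of Hodge type $(2,2)$, citing Deligne's theorem of the fixed part. But that theorem only says the monodromy-invariant subspace of $H^4(X_b,\Q)$ is a sub-Hodge structure; it does \emph{not} say every element of it is pure of type $(2,2)$. For rationally connected $X$ this is harmless, since $h^{0,2}=0$ makes all of $H^4(X_b,\Q)$ of type $(2,2)$ anyway. But in the Kodaira-dimension-zero case (take $X_b$ an abelian $3$-fold, or a K3 surface times an elliptic curve) the group $H^4(X_b,\Q)$ has nonzero $(3,1)$- and $(1,3)$-parts, and there is no formal reason a flat section should avoid them. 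What you are really using here is the rational Tate conjecture in codimension~$2$ --- equivalently, via hard Lefschetz on a $3$-fold, the Tate conjecture for divisors --- and that is a theorem requiring proof, not a formality. (Your codimension-$1$ paragraph likewise presupposes Tate for divisors without verifying it.)

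The paper makes this dependence explicit and avoids the spreading-out machinery altogether. Fixing an embedding $\overline{k}\hookrightarrow\C$, one uses hard Lefschetz to write $Nu=H\cdot v$ with $v\in H^2(X,\Z_l(1))$ a Tate class, and then verifies Tate for divisors on $X$ directly: trivially in the rationally connected case, and in the Kodaira-dimension-zero case via the singular Beauville--Bogomolov decomposition of H\"oring--Peternell, which reduces the question (through a quasi-\'etale cover and resolution) to abelian varieties and K3 surfaces, where Tate for divisors is due to Faltings--Zarhin and Tankeev respectively. Once $v$ is algebraic, $Nu$ is a $\Z_l$-combination of integral Hodge classes in $H^4(X_{\C},\Z)$; since the Hodge classes form a direct summand there, so is $u$ itself, and the integral Hodge conjecture (Voisin, resp.\ Theorem~\ref{minimal}) finishes. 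Your outline becomes correct if you insert exactly this input --- hard Lefschetz plus the verified codimension-$1$ Tate conjecture --- at the point where you currently invoke Principle~B.
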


\begin{proof}
We start by proving the following known lemma.

\begin{lemma}
\label{codim1}
Let $X$ be a smooth projective variety over the separable
closure $k_s$ of a finitely generated field $k$. For codimension-1 cycles
on $X$, the Tate conjecture implies the integral Tate conjecture.
\end{lemma}

\begin{proof}
For a prime number $l$ invertible in $k$, the Kummer sequence 
$$0\arrow \mu_{l^r}\arrow G_m\xrightarrow[l^r]{} G_m\arrow 0$$
of \'etale sheaves on $X$ gives a long exact sequence of cohomology,
and hence an exact sequence involving the Picard and Brauer groups:
$$0\arrow \Pic(X)/l^r\arrow H^2_{\et}(X,\mu_{l^r})\arrow \Hom(\Z/l^r,
\Br(X))\arrow 0.$$
Writing $NS(X)$ for the group of divisors modulo algebraic
equivalence, we have $\Pic(X)/l^r=NS(X)/l^r$, because the group
of $k_s$-points of an abelian variety
is $l$-divisible. Since $NS(X)$ is finitely
generated, taking inverse limits gives an exact sequence:
$$0\arrow NS(X)\otimes\Z_l\arrow H^2(X,\Z_l(1))\arrow \Hom(\Q_l/\Z_l,
\Br(X))\arrow 0.$$
The last group is automatically torsion-free. It follows that
the Tate conjecture implies the integral
Tate conjecture in the case of codimension-1 cycles.
\end{proof}

\begin{lemma}
\label{tatelemma}
Let $X$ be a smooth projective 3-fold over the algebraic closure
$\overline{k}$ of a finitely generated field $k$ of characteristic
zero. Suppose that the Tate conjecture holds for codimension-1
cycles on $X$, and that the integral Hodge conjecture holds
on $X_{\C}$ for some embedding $\overline{k}\inj \C$.
Then the integral Tate conjecture holds for $X$ (over $\overline{k}$).
\end{lemma}

\begin{proof}
By Lemma \ref{codim1}, the integral Tate conjecture
holds for codimension-1 cycles on $X$.
It remains to prove integral Tate for 1-cycles on $X$.
Let $u\in H^4(X,\Z_l(2))$ be a Tate class; that is, $u$
is fixed by $\Gal(\overline{k}/l)$ for some finite extension
$l$ of $k$. Let $H$ be an ample line bundle on $X$.
Multiplication by the class of $H$ is an isomorphism
from $H^2(X,\Q_l(1))$ to $H^4(X,\Q_l(2))$, by the hard
Lefschetz theorem. So there is a positive integer $N$ with
$Nu=Hv$ for some $v\in H^2(X,\Z_l(1))$. Because the isomorphism
from $H^2(X,\Q_l(1))$ to $H^4(X,\Q_l(2))$ is Galois-equivariant,
$v$ is a Tate class (this works even if there is torsion
in $H^2(X,\Z_l(1))$, because we are considering
Tate classes over $\overline{k}$). By our assumptions,
$v$ is algebraic, that is, a $\Z_l$-linear combination
of classes of subvarieties of $X$. So $Hv=Nu$ is algebraic and thus
a $\Z_l$-linear combination of classes of curves on $X$.

In particular, $Nu$ is a $\Z_l$-linear combination of Hodge classes
in $H^4(X_{\C},\Z)$. Since the subgroup of Hodge classes is a summand
in $H^4(X_{\C},\Z)$, it follows that $u$ is a $\Z_l$-linear
combination of Hodge classes in $H^4(X_{\C},\Z)$. Since
the integral Hodge conjecture holds for $X_{\C}$, $u$
is a $\Z_l$-linear combination of classes of curves on $X$.
\end{proof}

We prove Theorem \ref{tatezero} using Lemma \ref{tatelemma}.
The integral Hodge conjecture holds for rationally connected 3-folds
by Voisin \cite[Theorem 2]{Voisinint} and for 3-folds $X$ with
Kodaira dimension zero and $h^0(X,K_X)=1$
by Theorem \ref{minimal}, generalizing
Voisin \cite[Theorem 2]{Voisinint}. It remains to check the
Tate conjecture in codimension 1 for $X$
over $\overline{k}$. That is clear if
$h^{0,2}(X)=0$; then all of $H^2(X_{\C},\Z)$ is algebraic by the
Lefschetz $(1,1)$ theorem, and so all of $H^2(X,\Z_l(1))$ is
algebraic. That covers the case where $X$ is rationally connected.

It remains to prove the Tate conjecture in codimension 1
for a 3-fold $X$ over $\overline{k}$ of Kodaira dimension zero
with $h^{0,2}(X)>0$. Let $Y$ be a minimal model of $X$;
then $Y$ is terminal and has torsion canonical bundle.
By H\"oring and Peternell, generalizing the Beauville-Bogomolov
structure theorem
to singular varieties,
there is a projective variety $Z$
with canonical singularities and a finite morphism $Z\arrow Y$,
\'etale in codimension one, such that $Z$ is a product of an abelian
variety, (singular) irreducible symplectic varieties,
and (singular) Calabi-Yau varieties in a strict sense
\cite[Theorem 1.5]{HP}. Their theorem is stated over $\C$,
but that implies the statement over $\overline{k}$. H\"oring and Peternell
build on earlier work by Druel and Greb-Guenancia-Kebekus
\cite{Druel,GGK}.

Since $Y$ has dimension 3
and $h^0(Z,\Omega^{[2]})\geq h^0(Y,\Omega^{[2]})=h^0(X,\Omega^2)>0$,
the only possibilities are: $Z$ is an abelian 3-fold
or the product of an elliptic curve and a K3 surface with canonical
singularities. (A strict Calabi-Yau 3-fold $Z$ has $h^0(Z,\Omega^{[2]})=0$,
by definition.) So there is a resolution of singularities $Z_1$
of $Z$ which is either an abelian 3-fold or the product
of an elliptic curve and a smooth K3 surface.
Since we have a dominant rational map $Z_1\dashrightarrow X$,
the Tate conjecture in codimension 1 for $X$ will follow from
the same statement for $Z_1$ \cite[Theorem 5.2]{Tate94}.

It remains to prove the Tate conjecture in codimension 1
for $Z_1$, which is either an abelian 3-fold
or the product of a K3 surface 
and an elliptic curve over $\overline{k}$. 
Faltings proved
the Tate conjecture in codimension 1 for all abelian varieties
over number fields
\cite{Faltings}, extended to all finitely generated
fields of characteristic zero by Zarhin. Finally, the Tate
conjecture holds for K3 surfaces in
characteristic zero, by Tankeev \cite{Tankeev}.
Since $H^2(S\times E,\Q)\cong H^2(S,\Q)\oplus
H^2(E,\Q)$ for a K3 surface $S$ and an elliptic curve
$E$, the Tate conjecture in codimension 1 holds for $S\times E$.
\end{proof}

\section{The integral Tate conjecture for abelian 3-folds
in any characteristic}

We now prove the integral Tate conjecture for abelian 3-folds
in any characteristic. In characteristic zero, we have already
shown this in Theorem \ref{tatezero}.
However, it turns out that a more elementary proof works in any
characteristic,
modeled on Grabowski's proof of the integral Hodge conjecture
for complex abelian 3-folds \cite[Corollary 3.1.9]{Grabowski}.
More generally,
we show that the integral Tate conjecture holds for 1-cycles
on all abelian varieties of dimension $g$ if
the ``minimal class'' $\theta^{g-1}/(g-1)!$ is algebraic
on every principally
polarized abelian variety $(X,\theta)$ of dimension $g$
(Proposition \ref{minimalclass}).

\begin{theorem}
\label{abeliantate}
Let $X$ be an abelian 3-fold over the separable closure
of a finitely generated field. Then the integral Tate conjecture
holds for $X$.
\end{theorem}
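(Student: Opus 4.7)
The plan is to deduce Theorem \ref{abeliantate} from the more general Proposition \ref{minimalclass} flagged in the introduction to this section, which reduces the integral Tate conjecture for $1$-cycles on abelian $g$-folds to the algebraicity of the minimal class $\theta^{g-1}/(g-1)!$ on every principally polarized abelian variety of dimension $g$. For $g=3$, this input is classical. Combined with the codimension-$1$ integral Tate conjecture for abelian varieties, which is already available from the work of Faltings and Zarhin via Lemma \ref{codim1}, this would yield the full statement.

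First, I would dispose of the codimension-$1$ part. The Tate conjecture for divisors on an abelian variety over a finitely generated field is known in every characteristic (Faltings in characteristic zero, Zarhin in positive characteristic). Applying Lemma \ref{codim1} then upgrades this to the integral Tate conjecture for divisors on $X$ over the separable closure. This takes care of $H^2(X,\Z_l(1))$.

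Next, I would handle codimension-$2$ cycles. The key move is to invoke Proposition \ref{minimalclass} with $g=3$, which reduces the integral Tate conjecture for $1$-cycles on every abelian $3$-fold to the single statement: on every principally polarized abelian $3$-fold $(X',\theta)$ over a separably closed field, the integral class $\theta^2/2\in H^4(X',\Z_l(2))$ is algebraic. To verify this hypothesis, I would appeal to the structure of ppavs of dimension $\leq 3$: such an $(X',\theta)$ is either indecomposable, in which case the Torelli theorem (extended to arbitrary algebraically closed base fields by Oort--Ueno) identifies it with the Jacobian $J(C)$ of a smooth projective curve $C$ of genus $3$ and Poincar\'e's formula gives $\theta^2/2=[\iota(C)]$ for the Abel--Jacobi embedding $\iota\colon C\hookrightarrow X'$; or else $X'$ decomposes as a product of lower-dimensional ppavs with the product principal polarization, in which case the K\"unneth expansion of $\theta^2/2$ is a sum of products of theta divisors, points, and curve classes on the factors, each algebraic for trivial or dimension-$2$ reasons.

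The main obstacle is really packaged inside Proposition \ref{minimalclass} itself: given an arbitrary abelian $3$-fold $X$ over $F$, one must reduce the algebraicity of an integral Tate class $u\in H^4(X,\Z_l(2))$ to algebraicity of the minimal class on some principally polarized isogenous abelian $3$-fold, while keeping track of integrality. The point will be that, because the cohomology of an abelian variety is torsion-free and equal to $\Lambda^*H^1$, the Lefschetz decomposition with respect to a chosen polarization lets one write every integral class in $H^4(X,\Z_l(2))$ as an integer combination of products of divisor classes (handled by the codimension-$1$ case) plus an integer multiple of a pullback of a minimal class from an isogenous ppav; once that combinatorial/isogeny input is in place, Theorem \ref{abeliantate} follows immediately from the two ingredients above.
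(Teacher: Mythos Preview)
Your high-level strategy matches the paper's: both reduce Theorem \ref{abeliantate} to Proposition \ref{minimalclass} and then verify that the minimal class $\theta^2/2$ is algebraic on every principally polarized abelian $3$-fold. The one genuine difference is in that verification. The paper argues by \emph{specialization}: a general ppav $3$-fold is a Jacobian, Poincar\'e's formula makes $\theta^2/2$ the class of the Abel--Jacobi curve there, and then the specialization homomorphism on Chow groups carries algebraicity to every ppav $3$-fold. You instead do a case split, invoking the Oort--Ueno Torelli theorem for indecomposable ppavs and a K\"unneth computation for products. Both routes are valid; yours is slightly more hands-on and avoids the moduli/specialization machinery, while the paper's is cleaner and uniform.

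One correction worth making: your final paragraph misdescribes the mechanism inside Proposition \ref{minimalclass}. The paper does \emph{not} use a Lefschetz decomposition to write an arbitrary integral Tate class in $H^4$ as products of divisors plus a minimal-class contribution; integrality of such a decomposition would in fact be delicate. Instead, the argument runs through Beauville's Fourier transform: it is a Galois-equivariant $\Z_l$-isomorphism $H^2(X,\Z_l(1))\to H^{2g-2}(\widehat{X},\Z_l(g-1))$, and for an ample $L$ with $c_1(L)=h^*\theta$ one computes $F_X(c_1(L))=\widehat{h}_*(\theta^{g-1}/(g-1)!)$. Since the codimension-$1$ integral Tate conjecture (Tate, Faltings, Zarhin, plus Lemma \ref{codim1}) makes every Tate class in $H^2$ a $\Z_l$-combination of ample divisor classes, and the Fourier transform bijects Tate classes with Tate classes, algebraicity of the minimal class on the target ppavs gives the result. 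Since you are citing Proposition \ref{minimalclass} as a black box this does not break your proof, but you should not suggest that a Lefschetz-type splitting is what is doing the work.
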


\begin{proof}
The argument is based on Beauville's Fourier transform for Chow
groups of abelian varieties, inspired by
Mukai's Fourier transform for derived categories.
Write $CH^*(X)_{\Q}$ for $CH^*(X)\otimes\Q$.
Let $X$ be an abelian variety of dimension $g$ over a field $k$,
with dual abelian variety $\X:=\Pic^0(X)$,
and let $f\colon X\times \X\arrow X$ and
$g\colon X\times \X\arrow \X$ be the projections. The {\it Fourier
transform }$F_X\colon CH^*(X)_{\Q}\arrow CH^*(\X)_{\Q}$
is the linear map
$$F_X(u)=g_*(f^*(u)\cdot e^{c_1(L)}),$$
where $L$ is the Poincar\'e line bundle on $X\times \X$
and $e^{c_1(L)}=\sum_{j=0}^g c_1(L)^j/j!$. For $k$ separably
closed, define the Fourier
transform $H^*(X,\Q_l(*))\arrow H^*(\X,\Q_l(*))$
by the same formula.

By Beauville, the Fourier transform sends $H^j(X,\Z_l(a))$
to $H^{2g-j}(X,\Z_l(a+g-j)$, and this map is an isomorphism
\cite[Proposition 1]{BeauvilleFourier}. By contrast, it is not clear
whether the Fourier transform can be defined integrally on Chow groups;
that actually fails over a general field, by Esnault
\cite[section 3.1]{MP}. Beauville's proof (for complex abelian
varieties) uses that the integral cohomology of an abelian
variety is an exterior algebra
over $\Z$, and the same argument works for the $\Z_l$-cohomology
of an abelian variety over any separably closed field.

Next, let $\theta\in H^2(X,\Z_l(1))$ be the first Chern class
of a principal polarization on an abelian variety $X$.
Then we can identify $\X$ with $X$, and the Fourier transform satisfies
$$F_X(\theta^j/j!)=(-1)^{g-j}\, \theta^{g-j}/(g-j)!$$
\cite[Lemme 1]{BeauvilleFourier}.
Here $\theta^j/j!$ lies in $H^{2j}(X,\Z_l(j))$ (although it is not
obviously algebraic, meaning the class of an algebraic cycle
with $\Z_l$-coefficients). Finally, let $h\colon X\arrow Y$
be an isogeny, and write $\widehat{h}\colon \widehat{Y}\arrow
\widehat{X}$ for the dual isogeny. Then the Fourier transform switches
pullback and pushforward, in the sense that for $u\in CH^*(Y)_{\Q}$,
$$F_X(h^*(u))=\widehat{h}_*(F_Y(u))$$
\cite[Proposition 3(iii)]{BeauvilleFourier}.

The following is the analog for the integral Tate conjecture
of Grabowski's argument on the integral Hodge conjecture
\cite[Proposition 3.1.8]{Grabowski}.

\begin{proposition}
\label{minimalclass}
Let $k$ be the separable closure of a finitely generated field.
Suppose that for every principally polarized abelian variety
$(Y,\theta)$ of dimension $g$ over $k$, the minimal class
$\theta^{g-1}/(g-1)!\in H^{2g-2}(Y,\Z_l(g-1))$ is algebraic.
Then the integral Tate conjecture for 1-cycles
holds for all abelian varieties of dimension $g$ over $k$.
\end{proposition}

\begin{proof}
Let $X$ be an abelian variety of dimension $g$ over $k$,
and let $u$ be a Tate class in $H^2(X,\Z_l(1))$ (meaning
that $u$ is fixed by some open subgroup of the Galois group).
The Tate conjecture
holds for codimension-1 cycles on abelian varieties over $k$,
by Tate \cite{Tateendo}, Faltings \cite{Faltings},
and Zarhin. This implies the integral Tate conjecture
for codimension-1 cycles on $X$, by Lemma \ref{codim1}. So $u$
is a $\Z_l$-linear combination of classes of line bundles,
hence of ample line bundles.

For each ample line bundle $L$ on $X$, there is 
a principally polarized abelian variety $(Y,\theta)$
and an isogeny $h\colon X\arrow Y$ with $c_1(L)=h^*\theta$
\cite[Corollary 1, p.~234]{Mumford}. Then the Fourier transform
of $c_1(L)$ is given by:
\begin{align*}
F_X(c_1(L))&=F_X(h^*\theta)\\
&=\widehat{h}_*(F_Y(\theta))\\
&=\widehat{h}_*(\theta^{g-1}/(g-1)!).
\end{align*}
By assumption, $\theta^{g-1}/(g-1)!$ in $H^{2g-2}(Y,\Z_l(g-1))$
is algebraic (with $\Z_l$ coefficients).
Since the pushforward preserves algebraic classes,
the equality above shows that
$F_X(c_1(L))$ is algebraic. By the previous paragraph,
it follows that the Fourier transform
of any Tate class in $H^2(X,\Z_l(1))$
is algebraic in $H^{2g-2}(\X,\Z_l(g-1))$. 

Since the Fourier transform is Galois-equivariant and is an isomorphism
from $H^2(X,\Z_l(1))$ to $H^{2g-2}(\X,\Z_l(g-1))$, it sends
Tate classes bijectively to Tate classes. This proves the integral
Tate conjecture for 1-cycles on $\X$, hence for 1-cycles on every abelian
variety of dimension $g$ over $k$.
\end{proof}

We now return to the proof of Theorem \ref{abeliantate}.
Let $k$ be the separable closure of a finitely generated
field, and let $X$ be an abelian 3-fold over $k$.
We want to prove the integral Tate conjecture for $X$.

By Proposition \ref{minimalclass}, it suffices to show
that for every principally polarized abelian 3-fold $(X,\theta)$
over $k$,
the class $\theta^2/2$ in $H^4(X,\Z_l(2))$ is algebraic.
(This is clear for $l\neq 2$.) A general principally
polarized abelian 3-fold $X$ over $k$ is the Jacobian of a curve $C$
of genus 3. In that case, choosing a $k$-point of $C$
determines an embedding of $C$ into $X$, and the cohomology
class of $C$ on $X$ is $\theta^2/2$ by Poincar\'e's formula
\cite[p.~350]{GH}. (Poincar\'e proved this for Jacobian
varieties over $\C$, but that implies the same formula in $l$-adic
cohomology for Jacobians in any characteristic.)
By the specialization homomorphism on Chow
groups \cite[Proposition 2.6, Example 20.3.5]{Fultonintersection},
it follows that $\theta^2/2$
is algebraic for every principally polarized abelian 3-fold over $k$.
Theorem \ref{abeliantate} is proved.
\end{proof}


\small \sc UCLA Mathematics Department, Box 951555,
Los Angeles, CA 90095-1555

totaro@math.ucla.edu
\end{document}